\newtheorem*{rep@theorem}{\rep@title}
\newcommand{\newreptheorem}[2]{%
\newenvironment{rep#1}[1]{%
 \def\rep@title{#2 \ref{##1}}%
 \begin{rep@theorem}}%
 {\end{rep@theorem}}}
\theoremstyle{plain}
\newtheorem{teor}{Theorem}[section]
\newtheorem{lem}[teor]{Lemma}
\newtheorem{cor}[teor]{Corollary}
\newtheorem{prop}[teor]{Proposition}
\theoremstyle{definition}
\newtheorem{deft}[teor]{Definition}
\theoremstyle{remark}
\newtheorem{oss}[teor]{Remark}
\DeclareMathOperator\sign{sign}
\DeclareMathOperator\bbC{\mathbb{C}}
\DeclareMathOperator\bbH{\mathbb{H}}
\DeclareMathOperator\bbP{\mathbb{P}}
\DeclareMathOperator\bbR{\mathbb{R}}
\DeclareMathOperator\calB{\mathcal{B}}
\DeclareMathOperator\calV{\mathcal{V}}
\DeclareMathOperator\psl{\textup{PSL}}
\DeclareMathOperator\vol{\textup{Vol}}
\DeclareMathOperator\scrF{\mathscr{F}}
\begin{document}

\title[Borel invariant for Zimmer cocycles]{Borel invariant for measurable cocycles of $3$-manifold groups}

\author[A. Savini]{A. Savini}
\address{Section de Math\'ematiques, University of Geneva, Rue du Li\`evre 2, 1227 Geneva, Switzerland}
\email{Alessio.Savini@unige.ch}

\date{\today.\ \copyright{\ A. Savini 2019, Partially supported by the project \emph{Geometric and harmonic analysis with applications}, funded by EU Horizon 2020 under the Marie Curie grant agreement No 777822.}.}

\begin{abstract}

We introduce the notion of pullback along a measurable cocycle and we use it to extend the Borel invariant studied by Bucher, Burger and Iozzi to the world of measurable cocycles. 
The Borel invariant is constant along cohomology classes and has bounded absolute value. This allows to define maximal cocycles. We conclude by proving that maximal cocycles are actually trivializable to the restriction of the irreducible representation. 
 
\end{abstract}
  
\maketitle

\section{Introduction}

Rigidity theory is a mathematical subject which has been widely studied so far. One of the most celebrated result is Mostow Rigidity Theorem \cite{mostow68:articolo,Most73} which states that the isomorphism class of a lattice inside a simple Lie group $G$ of non-compact type boils down to its conjugacy class, provided that $G$ is not isomorphic to $\psl(2,\bbR)$. For higher rank lattices an even more rigid behaviour occurs. Indeed, when the target is an algebraic group over a local field of characteristic zero, Margulis \cite{margulis:super} proved that Zariski dense representations of a higher rank lattice can be actually extended to the ambient group. This property, named \emph{superrigidity}, was later extended by Zimmer \cite{zimmer:annals} to measurable cocycles. 

One of the possible ways to approach rigidity is based on techniques coming from bounded cohomology theory. For instance, when $\Gamma \leq \textup{Isom}^+(\bbH^n_{\bbR})$ is a torsion-free lattice with $n \geq 3$, Bucher, Burger and Iozzi \cite{bucher2:articolo} introduced the notion of \emph{volume} for a representation $\rho:\Gamma \rightarrow \textup{Isom}^+(\bbH^n_{\bbR})$. They showed that this invariant has bounded absolute value and it attains its maximum if and only if the representation $\rho$ is conjugated to the standard lattice embedding. 

More surprisingly, a similar result can be stated also for representations into $\psl(n,\bbC)$, when $\Gamma$ is a torsion-free lattice of $\psl(2,\bbC)$. Indeed Bucher, Burger and Iozzi \cite{BBIborel} exploited the \emph{bounded Borel class} $\beta_b(n) \in \textup{H}^3_{cb}(\psl(n,\bbC);\bbR)$ to introduce the \emph{Borel invariant} $\beta_n(\rho)$, whose behaviour is very similar to the volume. Namely, its absolute value is bounded and it is maximal only on the conjugacy class of the irreducible representation restricted to $\Gamma$. 

The purpose of this paper is to develop a framework to define the same invariant for measurable cocycles. Let $\Gamma \leq \psl(2,\bbC)$ be a torsion-free lattice. Let $(X,\mu_X)$ be a standard Borel probability $\Gamma$-space. Consider a measurable cocycle $\sigma:\Gamma \times X \rightarrow \psl(n,\bbC)$. We first define a way to pullback bounded cohomology classes via the measurable cocycle $\sigma$ (Section \ref{sec:pullback:cocycle}). Using this procedure, we can follow the same line of \cite{BBIborel} to define the Borel invariant $\beta_n(\sigma)$ associated to $\sigma$.

\begin{teor}\label{teor:rigidity}
Let $\Gamma \leq \psl(2,\bbC)$ be a torsion-free lattice and consider a standard Borel probability $\Gamma$-space. For any measurable cocycle $\sigma:\Gamma \times X \rightarrow \psl(n,\bbC)$ it holds 
$$
|\beta_n(\sigma)| \leq {n+1 \choose 3}\textup{Vol}(\Gamma \backslash \bbH^3_{\bbR}) \ ,
$$
and the equality holds if and only if the cocycle is cohomologous either to the irreducible representation $\pi_n:\psl(2,\bbC) \rightarrow \psl(n,\bbC)$ restricted to $\Gamma$ or to its complex conjugated. 
\end{teor}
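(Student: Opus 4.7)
The plan is to follow the scheme of \cite{BBIborel}, adapting each step from the representation setting to that of measurable cocycles via the pullback machinery developed in Section \ref{sec:pullback:cocycle}. The first task is to obtain a concrete integral formula for $\beta_n(\sigma)$. By amenability of the action of $\Gamma$ on the boundary $\partial \bbH^3_\bbR$, there exists a $\sigma$-equivariant measurable boundary map $\phi:\partial \bbH^3_\bbR \times X \to \mathcal{F}_n$, where $\mathcal{F}_n$ is the full flag variety of $\psl(n,\bbC)$. Pulling back a fixed cocycle representative $\beta$ of $\beta_b(n)\in \upH^3_{cb}(\psl(n,\bbC);\bbR)$ along $\phi$, and integrating against $\mu_X$, produces an explicit bounded cochain on $(\partial \bbH^3_\bbR)^4$ representing the pullback class, whose evaluation on the fundamental class of $\Gamma\backslash\bbH^3_\bbR$ returns $\beta_n(\sigma)$.

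The inequality $|\beta_n(\sigma)|\leq \binom{n+1}{3}\vol(\Gamma\backslash\bbH^3_\bbR)$ follows directly from this integral presentation: the pullback operation constructed in Section \ref{sec:pullback:cocycle} is norm non-increasing on bounded cohomology, $\mu_X$ is a probability measure (so integration over $X$ does not inflate the norm), and the Borel cocycle can be chosen so that its sup-norm realizes exactly the constant $\binom{n+1}{3}\, v_3$, with $v_3$ the volume of a regular ideal tetrahedron in $\bbH^3_\bbR$. Combining this with the classical computation of $\vol(\Gamma\backslash\bbH^3_\bbR)$ via positively oriented ideal tetrahedra yields the desired bound, essentially reducing to the estimate of \cite{BBIborel} fibered over $X$.

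Assume now that equality is attained. Then the integrand of the above formula must attain its sup-norm almost everywhere: equivalently, for $\mu_X$-a.e. $x\in X$ and almost every positively oriented ideal tetrahedron $(\xi_0,\ldots,\xi_3)\subset\partial \bbH^3_\bbR$, the $4$-tuple $(\phi(\xi_0,x),\ldots,\phi(\xi_3,x))\in\mathcal{F}_n^4$ is a configuration on which $\beta$ is extremal. The chain-geometric classification of such maximal configurations obtained in \cite{BBIborel} identifies them, up to complex conjugation, with the image of an ideal tetrahedron under the boundary extension $\pi_n^\partial:\partial\bbH^3_\bbR\to\mathcal{F}_n$ of the irreducible representation. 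This forces $\phi(\cdot,x)=f(x)\cdot \pi_n^\partial(\cdot)$ for $\mu_X$-a.e. $x$, for a measurable map $f:X\to\psl(n,\bbC)$ produced by a Lusin-type selection.

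Finally, the $\sigma$-equivariance of $\phi$ together with the essential uniqueness of $\pi_n^\partial$ (modulo the action of $\psl(n,\bbC)$) translates into the cocycle identity $\sigma(\gamma,x)=f(\gamma x)\,\pi_n(\gamma)\,f(x)^{-1}$ (respectively with $\bar\pi_n$ in the conjugate case), which is exactly the assertion that $\sigma$ is cohomologous to $\pi_n|_\Gamma$ or to $\bar\pi_n|_\Gamma$. I expect the main obstacle to lie in this last stage: carrying out the chain-geometric classification uniformly in the parameter $x$ and extracting a measurable trivialization $f$. Decoupling the $\partial\bbH^3_\bbR$ and $X$ variables, while tracking the null sets that inevitably depend on $x$, is the most delicate technical point, and forms the genuinely new content with respect to the classical representation-theoretic version of the argument.
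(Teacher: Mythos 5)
Your plan for the equality case follows the paper's route almost exactly (integral formula, maximality of a.e.\ configurations of flags, classification via the Veronese embedding, measurable selection of $f$, equivariance), but there is a genuine gap at the step you treat as automatic: the existence of the boundary map. Amenability of the $\Gamma$-action on $\bbP^1(\bbC)\times X$ only furnishes a $\sigma$-equivariant measurable map $\phi:\bbP^1(\bbC)\times X\rightarrow \mathcal{M}^1(\scrF(n,\bbC))$ into \emph{probability measures} on the flag variety, not into $\scrF(n,\bbC)$ itself. For an arbitrary cocycle no flag-valued boundary map need exist (think of cocycles with values in a compact or elementary subgroup), and the paper is explicit that such a map is only produced \emph{under the maximality hypothesis}: in Proposition \ref{prop:existence:boundary} one first pairs the measure-valued map with $B_n$, uses maximality to force the pairing to attain the extremal value $\binom{n+1}{3}\nu_3$ almost everywhere, and then invokes the uniqueness of the fourth flag in a maximal configuration to conclude that the measures are Dirac. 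Since your derivation of the inequality is routed through this nonexistent map, that part of your argument does not stand as written. The paper instead proves the bound (Proposition \ref{prop:borel:bound}) with no boundary map at all: one composes $\textup{H}^3_b(\sigma)$ with the transfer map $\textup{trans}^3_\Gamma$, uses that $\textup{H}^3_{cb}(\psl(2,\bbC);\bbR)$ is one-dimensional and generated by $\beta_b(2)$ to write the image as $\lambda\beta_b(2)$, identifies $\lambda=\beta_n(\sigma)/\vol(M)$ via $\tau^3_{\textup{dR}}$, and bounds $|\lambda|$ because both maps are norm non-increasing. You could salvage your route by working with the measure-valued map throughout, but you would still need the transfer/proportionality argument to identify the evaluation on the fundamental class with $\beta_n(\sigma)$ as an integral over straightened ideal tetrahedra (this also uses double ergodicity on $\bbP^1(\bbC)$ to kill degree-two alternating cochains, as in Proposition \ref{prop:formula}).

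Two smaller points. First, the statement is an ``if and only if'', and you never address the easy direction: that a cocycle cohomologous to $\pi_n|_\Gamma$ or $\overline{\pi}_n|_\Gamma$ is maximal. This follows from the invariance of $\beta_n$ along cohomology classes (Proposition \ref{prop:borel:cohomology}) together with $\beta_n(\sigma_\rho)=\beta_n(\rho)$ (Proposition \ref{prop:borel:representation}) and the maximality of $\pi_n|_\Gamma$ from \cite{BBIborel}; it should at least be stated. Second, once the flag-valued boundary map is available, your endgame is correct and coincides with the paper's: Proposition \ref{prop:maximal:map} applied to the slices $\phi_x$ yields $\phi_x=f(x)\calV_n$ with $f$ measurable, and comparing the two expressions for $\phi_{\gamma x}(\gamma\xi)$ gives $\pi_n(\gamma)=f(\gamma x)^{-1}\sigma(\gamma,x)f(x)$. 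The delicate point you flag (null sets depending on $x$) is real but is handled by Fubini exactly as in the paper.
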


Clearly the above statement generalizes the rigidity theorem given by Bucher, Burger and Iozzi \cite{BBIborel}. Indeed any representation $\rho:\Gamma \rightarrow \psl(n,\bbC)$ determines canonically a cocycle $\sigma_\rho$ and its Borel invariant is equal to the one of the representation $\rho$ (Proposition \ref{prop:borel:representation}). 

The proof of Theorem \ref{teor:rigidity} is inspired by the proof by Bader, Furman and Sauer \cite{sauer:articolo} of the $1$-tautness of the group $\textup{Isom}(\bbH^n_{\bbR})$. In the proof we will exploit the existence of a \emph{boundary map}, namely a measurable map $\phi:\bbP^1(\bbC) \times X \rightarrow \scrF(n,\bbC)$ which is $\sigma$-equivariant. Here $\scrF(n,\bbC)$ is the space of complete flags in $\bbC^n$. The existence of such a map for maximal cocycles is proved in Proposition \ref{prop:existence:boundary}. Since we can implement the pullback using a boundary map, this leads to a formula similar to \cite[Equation 15]{BBIborel}.

\begin{prop}\label{prop:formula}
Let $\Gamma \leq \psl(2,\bbC)$ be a torsion-free lattice and consider a standard Borel probability $\Gamma$-space. Let $\sigma:\Gamma \times X \rightarrow \psl(n,\bbC)$ be a measurable cocycle with a boundary map $\phi:\bbP^1(\bbC) \times X \rightarrow \scrF(n,\bbC)$. Then for every $\xi_0,\xi_1,\xi_2,\xi_3 \in \bbP^1(\bbC)$ it holds
$$
\frac{\beta_n(\sigma)}{\textup{Vol}(\Gamma \backslash \bbH^3_{\bbR})} \textup{Vol}(\xi_0,\ldots,\xi_3)=\int_{\Gamma \backslash \psl(2,\bbC)} \int_X B_n(\phi(\overline{g}\xi_0,x),\ldots,\phi(\overline{g}\xi_3,x))d\mu_X(x)d\mu(\overline{g}) \ ,
$$
where $\textup{Vol}$ is the hyperbolic volume function and $B_n$ is the Borel function. Here $\mu$ is the normalized probability measure induced on $\Gamma \backslash \psl(2,\bbC)$ induced by the Haar measure. 
\end{prop}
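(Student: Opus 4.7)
The plan is to unwind the definition of $\beta_n(\sigma)$ at the level of cocycles, insert the explicit formula for the pullback coming from the boundary map $\phi$, and then match the resulting $\psl(2,\bbC)$-invariant cocycle on $\bbP^1(\bbC)^4$ against the volume cocycle. The whole argument is parallel to \cite[Section 4]{BBIborel}; the essential extra ingredient is the cocycle-level pullback constructed in Section~\ref{sec:pullback:cocycle}.

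First I would realise continuous bounded cohomology on the appropriate boundary resolutions. By amenability of the Borel subgroup of $\psl(2,\bbC)$ (resp.\ of the stabiliser of a full flag in $\psl(n,\bbC)$), the complexes of $L^\infty$-alternating functions on $\bbP^1(\bbC)^{\bullet+1}$ and on $\scrF(n,\bbC)^{\bullet+1}$ compute $\textup{H}^\bullet_{cb}(\psl(2,\bbC);\bbR)$ and $\textup{H}^\bullet_{cb}(\psl(n,\bbC);\bbR)$ respectively. In this resolution the bounded Borel class $\beta_b(n)$ is represented by $B_n$, and the formula defining the pullback $\textup{H}^\bullet_b(\sigma)$ (Section~\ref{sec:pullback:cocycle}) reads, for a bounded Borel cocycle $c$ on $\scrF(n,\bbC)^{4}$,
\begin{equation*}
(\textup{H}^3_b(\sigma)c)(\xi_0,\ldots,\xi_3)=\int_X c\bigl(\phi(\xi_0,x),\ldots,\phi(\xi_3,x)\bigr)\,d\mu_X(x),
\end{equation*}
which gives a $\Gamma$-invariant bounded cocycle on $\bbP^1(\bbC)^{4}$.

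Next I would apply the transfer map $\mathrm{T}_b^\bullet:\textup{H}^\bullet_b(\Gamma;\bbR)\to\textup{H}^\bullet_{cb}(\psl(2,\bbC);\bbR)$, given on cocycles by integration over the fundamental domain $\Gamma\backslash\psl(2,\bbC)$ against the normalised Haar-induced measure $\mu$. Composing with the pullback $\textup{H}^3_b(\sigma)$ and evaluating on $\beta_b(n)$, one obtains an element of $\textup{H}^3_{cb}(\psl(2,\bbC);\bbR)$ represented, as a bounded alternating cocycle on $\bbP^1(\bbC)^4$, by
\begin{equation*}
(\xi_0,\ldots,\xi_3)\longmapsto\int_{\Gamma\backslash\psl(2,\bbC)}\int_X B_n\bigl(\phi(\overline{g}\xi_0,x),\ldots,\phi(\overline{g}\xi_3,x)\bigr)\,d\mu_X(x)\,d\mu(\overline{g}).
\end{equation*}
By the isomorphism $\textup{H}^3_{cb}(\psl(2,\bbC);\bbR)\cong\bbR$, generated by the hyperbolic volume cocycle on $\bbP^1(\bbC)^4$, this representative must be a scalar multiple of $\textup{Vol}(\xi_0,\ldots,\xi_3)$, and by definition of the Borel invariant (which is obtained by pairing $\textup{T}_b^3\circ\textup{H}^3_b(\sigma)\beta_b(n)$ with the fundamental class of $\Gamma\backslash\bbH^3_{\bbR}$, normalised by $\vol(\Gamma\backslash\bbH^3_{\bbR})$), the scalar is exactly $\beta_n(\sigma)/\vol(\Gamma\backslash\bbH^3_{\bbR})$.

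The main obstacle I expect is keeping the cohomological bookkeeping straight: one needs to verify that the cochain-level pullback defined in Section~\ref{sec:pullback:cocycle} really commutes with the transfer (up to explicit coboundaries), that the boundary map $\phi$ makes the diagram of resolutions commute, and that the normalisations of $\mu$, $\mu_X$, and of $\vol$ used in the definition of $\beta_n(\sigma)$ all match those appearing in the integral on the right-hand side. Once these normalisations are nailed down, uniqueness of the volume cocycle as a $\psl(2,\bbC)$-invariant bounded alternating function on $\bbP^1(\bbC)^4$ promotes the cohomological identity to the pointwise formula stated in the proposition.
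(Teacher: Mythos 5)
Your overall route is the same as the paper's: represent $\beta_b(n)$ by $B_n$ on the flag variety, pull back along the boundary map, transfer to $\psl(2,\bbC)$, and use that $\textup{H}^3_{cb}(\psl(2,\bbC);\bbR)$ is one-dimensional. However, the two steps you defer to ``bookkeeping'' are where the actual content of the proof lies, and one of them rests on a claim that is false as stated.

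First, the promotion from an identity of cohomology classes to an identity of cocycles. You appeal to ``uniqueness of the volume cocycle as a $\psl(2,\bbC)$-invariant bounded alternating function on $\bbP^1(\bbC)^4$''. Taken literally this is wrong: since $\psl(2,\bbC)$ acts sharply $3$-transitively on $\bbP^1(\bbC)$, invariant bounded measurable functions on $4$-tuples are essentially arbitrary functions of the cross-ratio, an infinite-dimensional space even after imposing the alternating condition. What actually removes the coboundary ambiguity is the vanishing $\textup{L}^\infty_{\textup{alt}}(\bbP^1(\bbC)^2;\bbR)^\Gamma=0$, which follows from double ergodicity of the $\Gamma$-action on $\bbP^1(\bbC)$: there are then no nonzero coboundaries in degree $3$ of the invariant alternating complex, so the equality of classes $\textup{trans}^3_\Gamma\circ\textup{H}^3_b(\sigma)(\beta_b(n))=\lambda\beta_b(2)$ becomes an equality of the chosen representatives almost everywhere. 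This is the key lemma missing from your argument. Even with it, one only obtains the formula for almost every $4$-tuple; the ``for every $\xi_0,\ldots,\xi_3$'' in the statement needs a separate argument, for which the paper invokes \cite[Proposition 4.2]{bucher2:articolo}.

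Second, the identification of the scalar. You assert that ``by definition'' $\beta_n(\sigma)$ is the pairing of the transferred class with a normalised fundamental class; but in this paper $\beta_n(\sigma)$ is defined through $\textup{J}^3$, the comparison map of the pair $(N,\partial N)$, and the relative fundamental class $[N,\partial N]$, with no transfer in sight. Relating that quantity to the coefficient $\lambda$ in $\textup{trans}^3_\Gamma\circ\textup{H}^3_b(\sigma)(\beta_b(n))=\lambda\beta_b(2)$ requires the commutative diagram of Proposition~\ref{prop:borel:bound} linking $\textup{trans}^3_\Gamma$ with the de Rham transfer $\tau^3_{\textup{dR}}$, the injectivity of $\tau^3_{\textup{dR}}$, and the normalisation $\langle\omega_{N,\partial N},[N,\partial N]\rangle=\vol(M)$. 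Once these two points are supplied, the remainder of your argument coincides with the paper's proof.
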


Thanks to the proposition above, under the assumption of maximality, the slice $\phi_x(\ \cdot \ )=\phi(\ \cdot, x)$ is measurable and maps almost every maximal ideal tetrahedron to a maximal configuration of flags, for almost every $x \in X$. Since such map lies in the $\psl(n,\bbC)$-orbit of the Veronese embedding $\calV_n:\bbP^1(\bbC) \rightarrow \scrF(n,\bbC)$, we get the measurable map $f$ which realizes the desired trivialization to the irreducible representation. 

As discussed in \cite{savini:tetr}, parabolic representations cannot be maximal. We are going to prove a similar statement also in this context. 

\begin{prop}\label{prop:parabolic:cocycle}
Let $\Gamma \leq \psl(2,\bbC)$ be a torsion-free lattice and consider a standard Borel probability $\Gamma$-space. Let $\sigma:\Gamma \times X \rightarrow \psl(n,\bbC)$ be a measurable cocycle. Suppose that $\sigma$ is cohomologous to a cocycle taking values into a parabolic subgroup $P$ of $\psl(n,\bbC)$. If $(n_1,\ldots,n_r)$ is the partition of $n$ associated to the subgroup $P$ then 
$$
|\beta_n(\sigma)| \leq \sum_{i=1}^r {n_i+1 \choose 3}\textup{Vol}(\Gamma \backslash \bbH^3_{\bbR}) \ .
$$
In particular $\sigma$ cannot be maximal. 
\end{prop}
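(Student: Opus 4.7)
The plan is to leverage the cohomology invariance of $\beta_n$ to replace $\sigma$ by a cohomologous cocycle $\sigma'$ with image in $P$, and then use the Levi decomposition $P = L \ltimes U$, where $L$ is (up to center) isomorphic to the block product of $\psl(n_i,\bbC)$ and $U$ is unipotent, in order to split the Borel invariant as a sum of Borel invariants of the block cocycles. Composing $\sigma'$ with the Lie group homomorphisms $p_i : P \to L \to \psl(n_i,\bbC)$ given by projection onto the $i$-th Levi factor, I obtain measurable cocycles $\sigma_i : \Gamma \times X \to \psl(n_i,\bbC)$ for $i = 1, \ldots, r$.

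The core step is to identify the pullback of the bounded Borel class $\beta_b(n) \in \upH^3_{cb}(\psl(n,\bbC);\bbR)$ along the inclusion $P \hookrightarrow \psl(n,\bbC)$ in terms of the Borel classes $\beta_b(n_i)$. Since the unipotent radical $U$ is amenable, the standard Burger--Monod machinery gives isomorphisms $\upH^*_{cb}(P;\bbR) \cong \upH^*_{cb}(L;\bbR) \cong \bigoplus_{i=1}^r \upH^*_{cb}(\psl(n_i,\bbC);\bbR)$, and under this identification one expects
$$
\beta_b(n)|_P \;=\; \sum_{i=1}^r p_i^*\, \beta_b(n_i).
$$
This ultimately reduces to a geometric statement about the Borel function $B_n$ on complete flags of $\bbC^n$ refining the partial flag fixed by $P$: such a flag induces complete flags on each subquotient $\bbC^{n_i}$, and $B_n$ evaluated on quadruples of adapted flags splits as $\sum_i B_{n_i}$ of the induced data. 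Feeding this decomposition into the pullback recipe from Section \ref{sec:pullback:cocycle} and into the integral formula of Proposition \ref{prop:formula} applied to $\sigma'$ yields
$$
\beta_n(\sigma) \;=\; \beta_n(\sigma') \;=\; \sum_{i=1}^r \beta_{n_i}(\sigma_i).
$$

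The required bound then follows by applying Theorem \ref{teor:rigidity} to each $\sigma_i$, which gives $|\beta_{n_i}(\sigma_i)| \leq \binom{n_i+1}{3}\vol(\Gamma \backslash \bbH^3_{\bbR})$, and summing via the triangle inequality. To rule out maximality it suffices to observe the strict combinatorial inequality $\binom{a+b+1}{3} > \binom{a+1}{3} + \binom{b+1}{3}$ for $a,b \geq 1$ (a direct check on the polynomial expressions), which by induction on the number of parts gives $\sum_{i=1}^r \binom{n_i+1}{3} < \binom{n+1}{3}$ whenever $r \geq 2$; since $P$ is a proper parabolic, $r \geq 2$ and maximality is impossible.

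The main obstacle is the decomposition formula for $\beta_b(n)|_P$ in the second step: identifying the restricted Borel class as a sum of block contributions requires both the amenability of $U$ to kill bounded cohomology in the relevant degree and a concrete additivity property of the Borel cocycle on flags adapted to the partition. The amenability part is routine, but the additivity of $B_n$ must be made precise enough to commute with the integration in Proposition \ref{prop:formula} — most likely by choosing boundary data for $\sigma'$ valued in the partial flag variety of $P$ and unpacking the Veronese-type geometry of adapted complete flags, so that the integral representing $\beta_n(\sigma')$ literally splits into $r$ integrals of the Borel functions $B_{n_i}$ evaluated along boundary maps of the $\sigma_i$.
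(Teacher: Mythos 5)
Your proposal is correct and follows essentially the same route as the paper: both reduce to replacing $\sigma$ by the cohomologous $P$-valued cocycle and decomposing the restricted bounded Borel class as $\beta_b(n)|_P=\sum_{i=1}^r\beta_b(n_i)$ via amenability of the unipotent radical and additivity of $B_n$ on adapted flags --- the decomposition you flag as the main obstacle is precisely what the paper imports as a citation to \cite[Proposition 1.2]{savini:tetr} rather than reproving. Your explicit check that ${a+b+1 \choose 3} > {a+1 \choose 3}+{b+1 \choose 3}$ for $a,b\geq 1$, hence $\sum_{i=1}^r{n_i+1 \choose 3}<{n+1 \choose 3}$ when $r\geq 2$, is a useful addition, since the paper leaves the non-maximality conclusion implicit.
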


Notice that the machinery developed here is the first instance of a more general framework studied by the author, Moraschini and Sarti \cite{moraschini:savini,moraschini:savini:2,savini:surface,sarti:savini,sarti:savini:2}. Indeed the Borel invariant is the first example of \emph{multiplicative constant} and Proposition \ref{prop:formula} is the first example of \emph{multiplicative formula} introduced and studied by the author.  

\subsection*{Plan of the paper}

The paper is organized as follows. The first section is dedicated to preliminary definitions. We recall the notion of measurable cocycle, then we move to the notion of continuous bounded cohomology for locally compact groups. Finally we conclude the section reminding the definition of the Borel invariant and we state its rigidity property.

The second section is devoted to the definition of pullback along a measurable cocycle. Using this notion we can define the Borel invariant of a measurable cocycle and we study some of its properties. We conclude by showing the main rigidity theorem. 

\subsection*{Acknowledgements} I would like to thank Marco Moraschini for having suggested my the paper by Bader, Furman and Sauer \cite{sauer:articolo}.

\section{Preliminary definitions}

\subsection{Basic aspects of measurable cocycles theory}

We briefly report here the main definitions about measurable cocycles that we will need in the sequel. We refer the reader either to Furstenberg's work \cite{furst:articolo} or to Zimmer's book \cite{zimmer:libro}. 

Let $G,H$ be two locally compact second countable groups endowed with their Haar measurable structures. We consider a standard Borel probability space $(X,\mu_X)$ with an essentially free measure-preserving $G$-action. We are going to call such a space a \emph{standard Borel probability $G$-space}. If $(Y,\nu)$ is another measure space, we are going to denote by $\textup{Meas}(X,Y)$ the space of measurable functions identified up to zero measure sets. We endow $\textup{Meas}(X,Y)$ with the topology of the convergence in measure. 

\begin{deft}\label{def:measurable:cocycle}
A measurable function $\sigma:G \times X \rightarrow H$ is a \emph{measurable cocycle} if it satisfies 
\begin{equation}\label{eq:cocycle}
\sigma(g_1g_2,x)=\sigma(g_1,g_2x)\sigma(g_2,x) \ ,
\end{equation}
for almost every $g_1,g_2 \in G$ and almost every $x \in X$. Here $g_2x$ denotes the $G$-action on $X$. 
\end{deft}

Looking at $\sigma$ as an element of $\textup{Meas}(G,\textup{Meas}(X,H))$, one can verify that Equation \eqref{eq:cocycle} boils down to the equation of Borel $1$-cocycle in the standard Eilenberg-MacLance cohomology (see Feldman and Moore \cite{feldman:moore}). Such an interpretation leads naturally to the notion of cohomology class for a measurable cocycle. 

\begin{deft}\label{def:cohomology:class}
Let $\sigma_1,\sigma_2:G \times X \rightarrow H$ be two measurable cocycles. Consider a measurable map $f:X \rightarrow H$. The \emph{$f$-twisted cocycle associated to $\sigma_1$} is the map 
$$
\sigma_1^f:G \times X \rightarrow H \ , \ \ \ \sigma_1^f(g,x):=f(gx)^{-1}\sigma(g,x)f(x) \ ,
$$
for almost every $g \in G, x \in X$. 

We say that $\sigma_1,\sigma_2$ are \emph{cohomologous} if there exists a measurable map $f:X \rightarrow H$ such that 
$$
\sigma_2=\sigma_1^f \ .
$$
\end{deft}

Measurable cocycles are quite ubiquitous in Mathematics. Representations are a particular example of measurable cocycles. Indeed, given any continuous representation $\rho:G \rightarrow H$ we can define the map
$$
\sigma_\rho:G \times X \rightarrow H \ , \ \ \ \sigma_\rho(g,x):=\rho(g) \ ,
$$
for every $g \in G$ and almost every $x \in X$. The fact that $\rho$ is a morphism implies easily that $\sigma_\rho$ is a measurable cocycle called \emph{measurable cocycle associated to $\rho$}. Notice that in the particular case when $G$ is a lattice, any representation is automatically continuous and hence it defines a measurable cocycle once we have fixed the standard Borel probability space $(X,\mu_X)$. 

We conclude this short exposition about measurable cocycles with the notion of boundary map. We are going to suppose that both $G$ and $H$ are two semisimple Lie groups of non-compact type (or lattices in such Lie groups). Denote by $B(G)$ the Furstenberg-Poisson boundary associated to $G$, which can be identified with the quotient $G/P$, where $P \leq G$ is a \emph{minimal parabolic subgroup}. Consider a measure space $(Y,\nu)$ such that $H$ acts on $Y$ by preserving the measure class of $\nu$. 

\begin{deft}\label{def:boundary:map}
Let $\sigma:G \times X \rightarrow H$ be a measurable cocycle. A \emph{boundary map} $\phi:B(G) \times X \rightarrow Y$ is a measurable map which is $\sigma$-\emph{equivariant}, namely
$$
\phi(g\xi,gx)=\sigma(g,x)\phi(\xi,x) \ ,
$$
for almost every $\xi \in B(G), x \in X$. 
\end{deft}

The existence and the uniqueness of boundary maps relies on dynamical properties of the cocycle $\sigma$, as shown for instance by Furstenberg \cite{furst:articolo}. For \emph{non-elementary} cocycles taking values into the isometry group of a negatively curved space, Monod and Shalom \cite[Proposition 3.3]{MonShal0} proved that a boundary map exists and it is essentially unique. We are going to prove the existence of a boundary map for maximal cocycles in Proposition \ref{prop:existence:boundary}. 

\subsection{Continuous bounded cohomology}

The following section is devoted to the definitions and the properties of both continuous cohomology and continuous bounded cohomology. For a more detailed discussion about those topics we refer the reader to the work of Burger and Monod \cite{monod:libro,burger2:articolo}. 

Let $G$ be a locally compact second countable group. A \emph{Banach $G$-module} $(E,\pi)$ is a Banach space $E$ together with a linear isometric action $\pi:G \rightarrow \textup{Isom}(E)$. We are going to denote by $E^G$ the subspace of \emph{$G$-invariant} vectors, that is those vectors $v \in E$ such that $\pi(g)v=v$ for every $g \in G$. 

Given a Banach $G$-module $E$, the space of \emph{continuous $E$-valued functions} is given by 
$$
\textup{C}^\bullet_{c}(G;E):= \{ f:G^{\bullet+1} \rightarrow E \ | \ f  \ \text{ is continuous} \} \ , 
$$
and we endow it with the following isometric action 
$$
(gf)(g_0,\ldots,g_\bullet):=\pi(g)f(g^{-1}g_0,\ldots,g^{-1}g_\bullet) \ ,
$$
for every $f \in \textup{C}_c^\bullet(G;E)$ and every $g,g_0,\ldots,g_\bullet \in G$. Together with the \emph{standard homogeneous coboundary operator} 
$$
\delta^\bullet:\textup{C}_c^\bullet(G;E) \rightarrow \textup{C}_c^{\bullet+1}(G;E) \ ,
$$
$$
\delta^\bullet f(g_0,\ldots,g_{\bullet+1}):=\sum_{i=0}^{\bullet+1} (-1)^i f(g_0,\ldots,g_{i-1},g_{i+1},\ldots,g_{\bullet+1}) \ ,
$$
we obtain an exact complex $(\textup{C}^\bullet_c(G;E),\delta^\bullet)$. 

\begin{deft}
The \emph{continuous cohomology} of $G$ with coefficients in $E$ is the cohomology of the complex $(\textup{C}^\bullet_c(G;E)^G,\delta^\bullet)$. It is denoted by $\textup{H}^\bullet_c(G;E)$. 
\end{deft}

The space $\textup{C}^\bullet_c(G;E)$ has a natural normed structure. Indeed for every $f \in \textup{C}^\bullet_c(G;E)$ we can define
$$
\lVert f \rVert_\infty:=\sup\{ \lVert f(g_0,\ldots,g_\bullet) \rVert_E \ | \ g_0,\ldots,g_\bullet \in G \} \ ,
$$
where $\lVert \  \cdot \ \rVert_E$ is the norm on $E$. A continuous $E$-valued function is \emph{bounded} if its norm is finite. We denote by $\textup{C}^\bullet_{cb}(G;E)$ the submodule of continuous bounded $E$-valued functions. Since $\delta^\bullet$ preserves boundedness, we can restrict it to the space $\textup{C}^\bullet_{cb}(G;E)^G$. 

\begin{deft}
The \emph{continuous bounded cohomology} of $G$ with coefficients in $E$ is the cohomology of the complex $(\textup{C}^\bullet_{cb}(G;E)^G,\delta^\bullet)$. It is denoted by $\textup{H}^\bullet_{cb}(G;E)$. 
\end{deft}

Notice that the $\textup{L}^\infty$-norm on cochains induces a natural quotient seminorm in cohomology. We say that an isomorphism between seminormed cohomology groups is \emph{isometric} if the corresponding quotient seminorms are preserved. 

A way to study the differences between continuous cohomology and continuous bounded cohomology relies on the \emph{comparison map}
$$
\textup{comp}^\bullet_G:\textup{H}^\bullet_{cb}(G;E) \rightarrow \textup{H}^\bullet_{c}(G;E) \ ,
$$
which is the map induced in cohomology by the inclusion 
$$
i:\textup{C}^\bullet_{cb}(G;E)^G \rightarrow \textup{C}^\bullet_{c}(G;E)^G \ .
$$

The computation of continuous bounded cohomology may be quite difficult if one want to follow only the definition we gave. Burger and Monod \cite{monod:libro,burger2:articolo} circumvented this problem adopting the use of strong resolutions by relatively injective $G$-modules. Since it would be too technical to introduce such notions here, we prefer to omit them and we refer the reader to \cite{monod:libro} for a broad discussion. 

Here we are going to construct explicitly only one resolution that we are going to use several times in the paper. Suppose that the Banach $G$-module $(E,\pi)$ is the dual of some Banach space. In that case we can endow $E$ with the weak-$^*$ topology and the weak-$^*$ measurable structure. Suppose that $G$ is a simple Lie group of non-compact type and denote by $B(G)$ the Furstenberg-Poisson boundary of $G$. 

Let
\begin{align*}
\mathcal{B}^\infty(B(G)^{\bullet+1};E):=\{ f:B(G)^{\bullet+1} \rightarrow E \ &| \ f \ \text{is weak-$^*$ measurable}\\
\sup_{\xi_0,\ldots,\xi_\bullet}& \lVert  f(\xi_0,\ldots,\xi_\bullet) \rVert_E \} \ . 
\end{align*}
be the space of \emph{bounded weak-$^*$ measurable $E$-valued functions}. Similarly, the space of \emph{essentially bounded weak-$^*$ measurable $E$-valued functions} is given by
$$
\textup{L}^\infty_{\textup{w}^*}(B(G)^{\bullet+1};E):=\{ \ [f] \ | \ f \in \mathcal{B}^\infty(B(G)^{\bullet+1};E) \ \} \ ,
$$
where $[f]$ is the equivalence class of functions identified up to a zero measure set. With an abuse of notation we are going to drop the parenthesis referring only to a representative of the class. A function will be \emph{alternating} if 
$$
\sign(\varepsilon)f(\xi_0,\ldots,\xi_\bullet)=f(\xi_{\varepsilon(0)},\ldots,\xi_{\varepsilon(\bullet)}) \ ,
$$
where $\varepsilon \in S_{\bullet+1}$ is a permutation and $\sign(\varepsilon)$ denotes its parity. 

We endow the function space $\mathcal{B}^\infty(B(G)^{\bullet+1};E)$ (respectively $\textup{L}^\infty_{\textup{w}^\ast}(B(G)^{\bullet+1};E)$)  with the $G$-action 
$$
(gf)(\xi_0,\ldots,\xi_\bullet):=\pi(g)f(g^{-1}\xi_0,\ldots,g^{-1}\xi_\bullet) \ ,
$$
where $g \in G, \xi_0,\ldots,\xi_\bullet \in B(G)$ and finally $f \in \mathcal{B}^\infty(B(G)^{\bullet+1};E)$ (respectively $f \in \textup{L}^\infty_{\textup{w}^\ast}(B(G)^{\bullet+1};E)$). 

Together with the standard homogeneous coboundary operator, we can construct a complex $(\textup{L}^\infty_{\textup{w}^\ast}(B(G)^{\bullet+1};E),\delta^\bullet)$. The main result concerning such complex is by Monod.

\begin{teor}\cite[Theorem 7.5.3]{monod:libro}\label{teor:resolution:boundary}
Let $G$ be a locally compact second countable group and let $(E,\pi)$ be a Banach $G$-module which is the dual of some Banach space. The cohomology of the complex $(\textup{L}^\infty_{\textup{w}^\ast}(B(G)^{\bullet+1};E)^G,\delta^\bullet)$ computes isometrically the continuous bounded cohomology $\textup{H}^\bullet_{cb}(G;E)$. 

The same result holds also for the subcomplex of alternating functions.
\end{teor}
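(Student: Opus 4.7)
The plan is to verify that the augmented complex
\[
0 \to E \to \textup{L}^\infty_{\textup{w}^\ast}(B(G)^{\bullet+1};E)
\]
is a strong resolution of $E$ by relatively injective Banach $G$-modules, and then to invoke the homological characterization of Burger and Monod, by which the cohomology of any such resolution computes $\textup{H}^\bullet_{cb}(G;E)$. The work splits into verifying strength (exactness together with a bounded contracting homotopy), relative injectivity, and upgrading the resulting isomorphism to an isometry.

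To prove strength, I would fix a probability measure $\nu$ on $B(G)$ in the Haar class and define a contracting homotopy
\[
h^\bullet f(\xi_0,\ldots,\xi_{\bullet-1}) := \int_{B(G)} f(\xi,\xi_0,\ldots,\xi_{\bullet-1})\, d\nu(\xi),
\]
together with $h^0 f := \int_{B(G)} f\, d\nu$ in degree zero. A direct expansion of $\delta^\bullet f(\xi,\xi_0,\ldots,\xi_\bullet)$ followed by integration in the first variable yields the identity $\delta^{\bullet-1}h^\bullet + h^{\bullet+1}\delta^\bullet = \id$, so the augmented complex is exact. Each $h^\bullet$ has norm at most one by monotonicity of the integral, so the resolution is strong in the Banach sense; note that $h^\bullet$ is not required to be $G$-equivariant, which is exactly the point of strength.

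The relative injectivity of each module $\textup{L}^\infty_{\textup{w}^\ast}(B(G)^{\bullet+1};E)$ rests on the amenability of the $G$-action on $B(G)=G/P$ in the sense of Zimmer. Combined with the adjunction $\textup{L}^\infty_{\textup{w}^\ast}(B(G)^{\bullet+1};E) \cong \textup{L}^\infty_{\textup{w}^\ast}(B(G);\textup{L}^\infty_{\textup{w}^\ast}(B(G)^\bullet;E))$, this reduces to the basic fact that $\textup{L}^\infty_{\textup{w}^\ast}(S;F)$ is relatively injective whenever $S$ is an amenable $G$-space and $F$ a dual Banach $G$-module: the required extension of an admissible morphism is produced by integrating against a $G$-equivariant mean on $B(G)$ supplied by amenability. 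With strength and relative injectivity in hand, the extension principle gives $G$-equivariant chain maps of norm at most one in both directions between our boundary resolution and the standard homogeneous resolution $\textup{C}^\bullet_{cb}(G;E)$ extending $\id_E$. Their composition is chain-homotopic to the identity on each side, so passing to cohomology yields mutually inverse norm-non-increasing maps, which forces isometry. I expect the main obstacle to lie precisely in obtaining norm-one, rather than merely bounded, comparison maps: this genuinely uses amenability of $B(G)$ beyond bare relative injectivity, via $G$-equivariant conditional expectations rather than abstract means.

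Finally, the alternating subcomplex is handled by the alternation operator
\[
\textup{Alt}^\bullet f(\xi_0,\ldots,\xi_\bullet) := \frac{1}{(\bullet+1)!}\sum_{\varepsilon \in S_{\bullet+1}} \sign(\varepsilon)\, f(\xi_{\varepsilon(0)},\ldots,\xi_{\varepsilon(\bullet)}),
\]
a $G$-equivariant chain map of norm one which retracts the full complex onto the alternating subcomplex and is chain-homotopic to the identity through the standard prism homotopy; the inclusion of the alternating subcomplex therefore induces an isometric isomorphism on cohomology, completing the argument.
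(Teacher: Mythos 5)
The paper does not prove this statement---it is quoted verbatim from Monod's book---so your proposal can only be measured against the standard argument there, which is indeed the architecture you describe: exhibit $\textup{L}^\infty_{\textup{w}^\ast}(B(G)^{\bullet+1};E)$ as a strong resolution of $E$ by relatively injective modules, deduce relative injectivity from Zimmer-amenability of the $G$-action on $B(G)=G/P$ via the exponential-law isomorphism reducing to one variable, and treat the alternating subcomplex with the alternation operator. Those parts of your outline, including the contracting homotopy by integration against a quasi-invariant probability measure in the first variable, are correct and are essentially Monod's steps.

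The genuine gap is in the isometry. You assert that ``the extension principle gives $G$-equivariant chain maps of norm at most one in both directions''; it does not. The extension lemma for a strong resolution mapping into a complex of relatively injectives produces comparison maps extending $\id_E$ that are unique up to $G$-homotopy, but the inductive construction feeds the contracting homotopy into each successive extension and yields no bound on the norms---this is exactly why the isomorphism between the cohomologies of two arbitrary strong resolutions by relatively injectives is in general only a topological isomorphism. The isometry must be obtained by writing down two explicit norm-one chain maps. One direction needs no amenability at all: for a quasi-invariant probability measure $\nu$ on $B(G)$, the assignment
$$
f \mapsto \Bigl( (g_0,\ldots,g_\bullet) \mapsto \int_{B(G)^{\bullet+1}} f(\xi_0,\ldots,\xi_\bullet)\, d(g_0\nu)(\xi_0)\cdots d(g_\bullet\nu)(\xi_\bullet) \Bigr)
$$
is a $G$-equivariant chain map of norm one into the \emph{measurable} standard resolution $\textup{L}^\infty_{\textup{w}^\ast}(G^{\bullet+1};E)$ extending $\id_E$ (one should compare with this resolution, which Monod separately shows computes $\textup{H}^\bullet_{cb}(G;E)$ isometrically, rather than with $\textup{C}^\bullet_{cb}(G;E)$ directly). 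The reverse direction is the crux: it is built from the norm-one $G$-equivariant conditional expectation $\textup{L}^\infty(G\times B(G)) \rightarrow \textup{L}^\infty(B(G))$ that Zimmer-amenability of $B(G)$ provides. You name this tool in a closing aside but never deploy it; until both maps are constructed, your argument establishes an isomorphism, not an isometry.
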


In some cases it can be useful to work directly with $\mathcal{B}^\infty(B(G)^{\bullet+1};E)$. Indeed we have the following

\begin{prop}\cite[Proposition 2.1,Corollary 2.2]{burger:articolo}\label{prop:strong:resolution}
Let $G$ be a locally compact second countable group and let $(E,\pi)$ be a Banach $G$-module which is the dual of some Banach space. The complex $(\mathcal{B}^\infty(B(G)^{\bullet+1};E),\delta^\bullet)$ is a strong resolution of $E$. As a consequence there exists a canonical map 
$$
\mathfrak{c}^n:\textup{H}^n(\mathcal{B}^\infty(B(G)^{\bullet+1};\bbR)^G) \rightarrow \textup{H}^n(\textup{L}^\infty_{\textup{w}^\ast}(B(G)^{\bullet+1};E)^G) \cong \textup{H}^n_{cb}(G;E) \ ,
$$
for every $n \geq 0$. 
\end{prop}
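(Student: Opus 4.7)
The plan is to prove both assertions in sequence: first that $(\mathcal{B}^\infty(B(G)^{\bullet+1};E),\delta^\bullet)$ is a strong resolution of $E$, then that the natural inclusion into the $\textup{L}^\infty_{\textup{w}^\ast}$-complex induces the canonical map.

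For the strong resolution property in the Burger--Monod sense, I would first check that each $\mathcal{B}^\infty(B(G)^{n+1};E)$ is a Banach $G$-module. Completeness with respect to the sup norm is standard: a uniformly Cauchy sequence of bounded weak-$^\ast$ measurable functions converges pointwise to a function that is again bounded and weak-$^\ast$ measurable. The diagonal $G$-action is isometric because $\pi$ is. Next I would exhibit a norm-bounded contracting homotopy on the augmented complex obtained by prepending the embedding $\varepsilon\colon E\to \mathcal{B}^\infty(B(G);E)$, $\varepsilon(v)(\xi):=v$. Fixing any base point $\xi_\ast\in B(G)$, set $h^0(f):=f(\xi_\ast)$ and
$$
h^n(f)(\xi_0,\ldots,\xi_{n-1}):=f(\xi_\ast,\xi_0,\ldots,\xi_{n-1})
$$
for $n\geq 1$. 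A direct computation using the definition of $\delta^\bullet$ yields $\delta^{n-1}h^n+h^{n+1}\delta^n=\textup{id}$ together with $h^0\circ\varepsilon=\textup{id}_E$, so the augmented complex is chain-contractible with contracting homotopy of operator norm at most one. The homotopy is not required to be $G$-equivariant, which is precisely the flexibility allowed by the notion of strong resolution.

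For the canonical map $\mathfrak{c}^n$, I would consider the natural $G$-equivariant chain morphism
$$
i^\bullet\colon \mathcal{B}^\infty(B(G)^{\bullet+1};E)\to \textup{L}^\infty_{\textup{w}^\ast}(B(G)^{\bullet+1};E)
$$
sending a bounded weak-$^\ast$ measurable function to its equivalence class. It commutes with the coboundary operators and the $G$-actions, and is compatible with both augmentations into $E$. Restricting to $G$-invariants and passing to cohomology yields $\mathfrak{c}^n$; the identification of the target with $\textup{H}^n_{cb}(G;E)$ is precisely Theorem \ref{teor:resolution:boundary}. Canonicity follows from the fundamental lemma of homological algebra in the Burger--Monod setting: any two $G$-morphisms of strong resolutions extending $\textup{id}_E$ into a strong resolution by relatively injective modules are chain homotopic.

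The main subtlety is not technical but conceptual: one must carefully distinguish the pointwise nature of $\mathcal{B}^\infty$ from the essentially-everywhere nature of $\textup{L}^\infty_{\textup{w}^\ast}$. Evaluation at the slice $\{\xi_\ast\}\times B(G)^n$ is well defined on actual functions but not on equivalence classes, which is precisely what makes the contracting homotopy available on the $\mathcal{B}^\infty$-complex and unavailable on the $\textup{L}^\infty_{\textup{w}^\ast}$-complex. This same asymmetry explains why $\mathfrak{c}^n$ flows only in the stated direction and need not be injective, since distinct bounded cocycles on $\mathcal{B}^\infty$ can agree almost everywhere and hence map to the same class in $\textup{L}^\infty_{\textup{w}^\ast}$.
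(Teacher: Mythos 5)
The paper offers no proof of this statement, only the citation to Burger--Iozzi, and your argument reconstructs precisely the cited one: the evaluation-at-a-basepoint contracting homotopy $h^n(f)(\xi_0,\ldots,\xi_{n-1})=f(\xi_\ast,\xi_0,\ldots,\xi_{n-1})$, of norm at most one and available exactly because elements of $\mathcal{B}^\infty$ are genuine functions rather than a.e.-classes, gives the strong resolution property, while the canonical map $\mathfrak{c}^n$ is induced by the quotient morphism into the $\textup{L}^\infty_{\textup{w}^\ast}$-resolution combined with Theorem \ref{teor:resolution:boundary} and the uniqueness up to homotopy of $G$-morphisms extending $\textup{id}_E$ into a relatively injective strong resolution. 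Your proposal is correct and takes essentially the same route as the source.
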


\subsection{Transfer maps}
In this section we are going to recall two maps which \emph{transfer} the information from the lattice to the ambient group. We will use those maps to show that the Borel invariant is a \emph{multiplicative constant} in the sense of \cite[Definition 3.20]{moraschini:savini:2}. 

Let $\Gamma \leq \psl(2,\bbC)$  be a torsion-free lattice. We can define the following map at the level of cochains.
$$
\widehat{\textup{trans}}^\bullet_\Gamma:\textup{L}^\infty(\bbP^1(\bbC)^{\bullet+1};\bbR)^\Gamma \rightarrow \textup{L}^\infty(\bbP^1(\bbC);\bbR)^{\psl(2,\bbC)} \ ,
$$
$$
\widehat{\textup{trans}}^\bullet_\Gamma(\psi)(\xi_0,\ldots,\xi_\bullet):=\int_{\Gamma \backslash \psl(2,\bbC)} \psi(\overline{g}\xi_0,\ldots,\overline{g}\xi_\bullet)d\mu(\overline{g}) \  ,
$$
where $\mu$ is the normalized probability measure on $\Gamma \backslash \psl(2,\bbC)$ induced by the Haar measure on $G$. 

Since $\widehat{\textup{trans}}^\bullet_\Gamma$ is linear, it is easy to check that is commutes with the coboundary operator. Additionally if $\psi$ is $\Gamma$-invariant, its image is $\psl(2,\bbC)$-invariant by the $\psl(2,\bbC)$-invariance of the measure $\mu$.  

\begin{deft}	\label{def:transfer:map}
The \emph{transfer map} is the map induced in bounded cohomology by $\widehat{\textup{trans}}^\bullet_\Gamma$, that is 
$$
\textup{trans}^\bullet_\Gamma:\textup{H}^\bullet_b(\Gamma;\bbR) \rightarrow \textup{H}^\bullet_{cb}(\psl(2,\bbC);\bbR) \ . 
$$
\end{deft}

The second map we want to introduce require more work. Denote by $M=\Gamma \backslash \bbH^3_{\bbR}$. If the manifold is not compact we are going to consider a compact core $N \subset M$. Recall that to compute the cohomology $\textup{H}^\bullet(M,M \setminus N;\bbR)$ we can exploit the De Rham isomorphism and use the complex of differential forms $\Omega^\bullet(M,M \setminus N;\bbR)$ defined on $M$ and vanishing on $M \setminus N$ (in the case that $M$ is compact the latter space is empty). 

Following \cite{BBIborel}, we denote by $U=p^{-1}(M \setminus N)$ the preimage under the covering map $p:\bbH^3_{\bbR} \rightarrow M$ of a finite union of horocyclic neighborhoods of the cusps (again we stress that if $M$ is compact the latter space is empty). Since we can indentify the space $\Omega^\bullet(M,M\setminus N;\bbR)$ with the $\Gamma$-invariant differential forms $\Omega^\bullet(\bbH^3_{\bbR},U;\bbR)^\Gamma$ on $\bbH^3_{\bbR}$ vanishing on $U$, we can define 
$$
\widehat{\tau}_{\textup{dR}}^\bullet:\Omega^\bullet(\bbH^3_{\bbR},U;\bbR)^\Gamma \rightarrow \Omega^\bullet(\bbH^3_{\bbR};\bbR)^{\psl(2,\bbC)} \ ,
$$
$$
\widehat{\tau}_{\textup{dR}}^\bullet(\psi):=\int_{\Gamma \backslash \psl(2,\bbC)} \overline{g}^\ast \psi d\mu(\overline{g})
$$
where $\overline{g}^\ast \psi$ denotes the pullback form and $\mu$ is the usual normalized probability measure on $\Gamma \backslash \psl(2,\bbC)$. 

One can apply the same arguments valid for $\widehat{\textup{trans}}^\bullet_{\Gamma}$ to obtain a well-defined map in cohomology.

\begin{deft}
We denote by $\tau^\bullet_{\textup{dR}}$ the map induced in cohomology by $\widehat{\tau}_{\textup{dR}}^\bullet$, that is
$$
\tau^\bullet_{\textup{dR}}:\textup{H}^\bullet(N,\partial N;\bbR) \cong \textup{H}^\bullet(M,M \setminus N;\bbR) \rightarrow \textup{H}^\bullet(\Omega^\bullet(\bbH^3_{\bbR};R)^{\psl(2,\bbC)}) \cong \textup{H}^\bullet_c(\psl(2,\bbC);\bbR) \ .
$$
The isomorphism appearing on the left is due to the invariance of cohomology with respect to homotopy equivalence. The isomorphism on the right is the Van Est isomorphism \cite[Cor. 7.2]{guichardet}.
\end{deft}

\subsection{The Borel cocycle} 
We are going to recall briefly the definition and the properties of the \emph{Borel cocycle}. We refer the reader to \cite{BBIborel},\cite{savini:tetr} for more details.  

Define the set
\[
\mathfrak{S}_k(m):=\{ (x_0,\ldots,x_k) \in (\bbC^m)^{k+1} | \langle x_0,\ldots x_k \rangle = \bbC^m \} / \textup{GL}(m,\bbC)
\]
where $\textup{GL}(m,\bbC)$ acts on $(k+1)$-tuples of vectors by the diagonal action and $\langle x_0,\ldots x_k \rangle$ is the $\bbC$-linear space generated by $x_0,\ldots,x_k$. When $k < m-1$ the space defined above is obviously empty. If $V$ is an $m$-dimensional vector space over $\bbC$, given any $(k+1)$-tuple of spanning vectors $(x_0,\ldots,x_k) \in V^{k+1}$, we can fix an isomorphism $V \rightarrow \bbC^m$. Any two different isomorphisms are related by an element $g \in \textup{GL}(m,\bbC)$ and hence they both determine a well-defined element of $\mathfrak{S}_k(m)$ which will be denoted by $[V;(x_0,\ldots,x_k)]$. We set
\[
\mathfrak{S}_k:= \bigsqcup_{m \geq 0} \mathfrak{S}_k(m)= \mathfrak{S}_k(0) \sqcup \ldots \sqcup \mathfrak{S}_k(k+1).
\]

The hyperbolic volume function $\vol:(\bbP^1(\bbC))^4 \rightarrow \bbR$ can be thought of as defined on $(\bbC^2 \setminus \{0\})^4$, so it is extendable to 
\[
\vol: \mathfrak{S}_3 \rightarrow \bbR
\]
where we set $\vol|\mathfrak{S}_3(m)$ to be identically zero if $m \neq 2$ and 
\[
\vol[\bbC^2;(v_0,\ldots,v_3)]:=
	\begin{cases}
	&\vol(v_0,\ldots,v_3) \hspace{10pt} \text{if each $v_i \neq 0$,}\\
	&0 \hspace{10pt} \text{otherwise}.
	\end{cases}
\]
 
The function above is the key tool that we use to define a cocycle on the space $\mathscr{F}_\textup{aff}(n,\bbC)^4$ of $4$-tuples of affine flags. A \textit{complete flag} $F$ of $\bbC^n$ is a sequence of nested subspaces
\[
F^0 \subset F^1 \subset  \ldots F^{n-1} \subset F^n
\]
where $\dim_{\bbC} F^i=i$ for $i=1,\ldots,n$. The space $\scrF(n,\bbC)$ of all the possible complete flags of $\bbC^n$ is a complex variety which can be thought of as the quotient of $\psl(n,\bbC)$ by any of its Borel subgroups. An \textit{affine flag} $(F,v)$ of $\bbC^n$ is a complete flag $F$ with the additional datum of a decoration $v=(v^1,\ldots,v^n) \in (\bbC^n)^n$ such that
\[
F^i=\bbC v^i+F^{i-1}
\]
for every $i=1,\ldots n$. Given any $4$-tuple of affine flags $\mathbf{F}=((F_0,v_0),\ldots,(F_3,v_3))$ of $\bbC^n$ and a multi-index $\mathbf{J} \in \{ 0,\ldots,n-1\}^4$, we set

\[
\mathcal{Q}(\mathbf{F},\mathbf{J}):=\left[ \frac{\langle F_0^{j_0+1},\ldots,F_3^{j_3+1} \rangle}{\langle F_0^{j_0},\ldots,F_3^{j_3} \rangle};(v_0^{j_0+1},\ldots,v_3^{j_3+1})\right],
\]
which is an element of $\mathfrak{S}_3$. With the previous notation, we define the cocycle $B_n$ as 
\[
B_n((F_0,v_0),\ldots,(F_3,v_3)):=\sum_{\mathbf{J} \in \{0,\ldots,n-1\}^4} \vol \mathcal{Q}(\mathbf{F},\mathbf{J}) \ .
\]

Bucher, Burger and Iozzi proved the following 

\begin{prop}\cite[Corollary 13, Theorem 14]{BBIborel}\label{prop:borel:cocycle:BBI}
The function $B_n$ does not depend on the decoration used to compute it and hence it descends naturally to a function 
$$
B_n:\scrF(n,\bbC)^4 \rightarrow \bbR \ ,
$$
on $4$-tuples of flags which is defined everywhere. Moreover that function is a measurable $\psl(n,\bbC)$-invariant alternating cocycle whose absolute value is bounded by ${n+1 \choose 3}\nu_3$, where $\nu_3$ is the volume of a positively oriented regular ideal tetrahedron in $\bbH^3_{\bbR}$.
\end{prop}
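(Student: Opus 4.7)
The plan is to verify each assertion of the proposition in turn, following the scheme of \cite{BBIborel}.

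First I would establish decoration independence. Two decorations $v_i, v'_i$ of the same flag $F_i$ must satisfy $v_i^{j+1} \equiv \lambda_i^{j+1} v'^{j+1}_i \pmod{F_i^j}$ for some nonzero scalars $\lambda_i^{j+1}$. Consequently the images of $v_i^{j_i+1}$ and $v'^{j_i+1}_i$ in the quotient $\langle F_0^{j_0+1},\ldots,F_3^{j_3+1}\rangle / \langle F_0^{j_0},\ldots,F_3^{j_3}\rangle$ differ by independent rescaling on each of the four components. Since the hyperbolic volume on $(\bbP^1(\bbC))^4$ is scale-invariant in each argument (it factors through projectivization) and since $\vol$ vanishes identically on $\mathfrak{S}_3(m)$ for $m\neq 2$, the value $\vol\, \mathcal{Q}(\mathbf{F},\mathbf{J})$ is unchanged. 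Summing over $\mathbf{J}$ yields $B_n(\mathbf{F},v)=B_n(\mathbf{F},v')$, so $B_n$ descends to a function on $\scrF(n,\bbC)^4$ defined everywhere.

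Measurability is clear because $\mathcal{Q}$ is built from elementary linear-algebraic operations on the flags and the components of the decorations, and $\vol$ is measurable on $\mathfrak{S}_3$. For $\psl(n,\bbC)$-invariance I observe that for $g\in \textup{GL}(n,\bbC)$ the action of $g$ provides a compatible isomorphism between the relevant quotient spaces, so $\mathcal{Q}(g\mathbf{F},\mathbf{J})=\mathcal{Q}(\mathbf{F},\mathbf{J})$ in $\mathfrak{S}_3$, and the invariance passes to $\psl(n,\bbC)$. The alternating property is obtained by reindexing the sum through a permutation $\varepsilon\in S_4$: a permutation of the four pairs $(F_i,v_i)$ induces the same permutation of the four vectors in each quotient, and alternation of the hyperbolic volume on $(\bbP^1(\bbC))^4$ then produces the global sign $\sign(\varepsilon)$.

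For the cocycle property I would expand $\delta B_n((F_0,v_0),\ldots,(F_4,v_4))$ and regroup the contributions by multi-indices $\mathbf{J}\in\{0,\ldots,n-1\}^5$. For each such $\mathbf{J}$ the alternating sum organizes into $\delta \vol$ applied to a $5$-tuple in $\mathfrak{S}_4$, which vanishes because the hyperbolic volume is a cocycle on configurations of ideal points in $\bbH^3_\bbR$. Hence $\delta B_n \equiv 0$. For the bound, each nonzero summand $\vol\,\mathcal{Q}(\mathbf{F},\mathbf{J})$ is the volume of an ideal tetrahedron in $\bbH^3_\bbR$ and so is bounded in absolute value by the maximum $\nu_3$. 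A term can be nonzero only when $\mathcal{Q}(\mathbf{F},\mathbf{J})\in\mathfrak{S}_3(2)$, which forces the consecutive dimensions $\dim\langle F_0^{j_0},\ldots,F_3^{j_3}\rangle$ and $\dim\langle F_0^{j_0+1},\ldots,F_3^{j_3+1}\rangle$ to differ by exactly $2$. A combinatorial count of the admissible dimension-jump patterns along the joint filtration indexed by $\mathbf{J}$ yields precisely $\binom{n+1}{3}$ such contributions, giving $|B_n|\leq \binom{n+1}{3}\nu_3$.

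The step I expect to be the main obstacle is this final combinatorial count: one must reduce the a priori $n^4$ multi-indices to the $\binom{n+1}{3}$ that can actually contribute, and this requires a careful analysis of how dimensions of the spans $\langle F_0^{j_0},\ldots,F_3^{j_3}\rangle$ can increase as the multi-index varies. Once that bookkeeping is in place the other assertions follow in a more or less mechanical way from decoration independence and the known properties of the hyperbolic volume on $(\bbP^1(\bbC))^4$.
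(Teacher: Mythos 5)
First, note that the paper does not prove this proposition at all: it is quoted from Bucher--Burger--Iozzi \cite{BBIborel} (their Corollary 13 and Theorem 14), so there is no internal argument to compare yours against. Judged on its own terms, your outline follows the right strategy for decoration independence (scalars survive into the quotient, and $\vol$ factors through projectivization and vanishes off $\mathfrak{S}_3(2)$), for measurability, $\psl(n,\bbC)$-invariance and alternation, and for the pointwise bound $|\vol\,\mathcal{Q}(\mathbf{F},\mathbf{J})|\le\nu_3$. But the two assertions that carry the real content are not established.

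The first gap is the cocycle relation. Your regrouping does not work as stated. For a $5$-tuple of affine flags, the term of $B_n$ with the $i$-th flag omitted and multi-index $(j_0,\ldots,\widehat{j_i},\ldots,j_4)$ lives in the quotient $\langle F_0^{j_0+1},\ldots,\widehat{F_i^{j_i+1}},\ldots\rangle/\langle F_0^{j_0},\ldots,\widehat{F_i^{j_i}},\ldots\rangle$, and this is \emph{not} a face of any single element of $\mathfrak{S}_4$ built from all five flags: such a face would sit in a quotient whose denominator still contains $F_i^{j_i}$, so vectors that vanish in one quotient need not vanish in the other. There is also a counting mismatch: each face of $\delta B_n$ is a sum over $\{0,\ldots,n-1\}^4$, whereas your regrouping over $\{0,\ldots,n-1\}^5$ would count each such term $n$ times. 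What is missing is the additivity (telescoping) lemma of \cite{BBIborel}: inserting the full filtration $F_i^0\subset\cdots\subset F_i^n$ of the omitted flag into the span and summing the resulting $\vol$-values over $j_i$ recovers the term without $F_i$. Only then can the sums be exchanged and the claim reduced to $\sum_{i=0}^4(-1)^i\vol(a_ic)=0$ for the face maps $a_i$ of a configuration $c\in\mathfrak{S}_4$ --- an identity that itself requires a case analysis over the strata $\mathfrak{S}_4(2)$ and $\mathfrak{S}_4(3)$, because $\vol$ was extended by zero and a face of a configuration spanning $\bbC^3$ can drop to a plane. The second gap is the one you flag yourself: the bound hinges on showing that at most ${n+1 \choose 3}$ multi-indices $\mathbf{J}$ can contribute, via a monotonicity analysis of $\mathbf{J}\mapsto\dim\langle F_0^{j_0},\ldots,F_3^{j_3}\rangle$, and this is asserted rather than proved. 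As written, the proposal is an accurate table of contents for the argument in \cite{BBIborel}, but the two steps on which the proposition actually rests are respectively misstated and left open.
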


As a consequence of Proposition \ref{prop:strong:resolution} the function $B_n$ determines a bounded cohomology class in $\textup{H}^3_{cb}(\psl(n,\bbC);\bbR)$. We are going to denote such a class by $\beta_b(n)$. 

\begin{deft}
The cocycle $B_n$ is called \emph{Borel cocycle} and the class $\beta_b(n)$ is called \emph{bounded Borel class}. 
\end{deft}

Generalizing a previous result by Bloch~\cite{bloch:libro} for $\psl(2,\bbC)$, in \cite[Theorem 2]{BBIborel} the authors proved that the cohomology group $\textup{H}^3_{cb}(\psl(n,\bbC);\bbR)$ is a one-dimensional vector space generated by the bounded Borel class.

Since we are going to use it later, we recall the main rigidity property of the Borel cocycle. Denote by $\calV_n:\bbP^1(\bbC) \rightarrow \scrF(n,\bbC)$ the \emph{Veronese embedding} of the complex projective line into the space of complete flags. It is defined as follows. Let $\calV_n(\xi)_i$ be the $i$-dimensional space of the flag $\calV_n(\xi)$. If $\xi$ has homogeneous coordinates $[x:y]$, the $(n-i)$-dimensional subspace $\calV_n^{n-i}(\xi)$ has a basis given by
\[
\left( 0, \ldots, 0, x^i, {{i}\choose{1}}x^{i-1}y,\ldots, {{i}\choose{j}}x^{i-j}y^j, \ldots, {{i}\choose{i-1}}xy^{i-1},y^i,0,\ldots,0 \right)^T
\]
where the first are $k$ zeros and the last are $n-i-k-1$ zeros, for $k=0,\ldots,n-1-i$. 

\begin{deft}
Let $(F_0,\ldots,F_3) \in \scrF(n,\bbC)^4$ be a $4$-tuple of flags. We say that the $4$-tuple is \textit{maximal} if
\[
|B_n(F_0,\ldots,F_3)|={{n+1}\choose{3}}\nu_3.
\]
\end{deft}

The Veronese embedding allows to describe completely the set of maximal flags. More precisely,~\cite[Theorem 19]{BBIborel} shows that if a $4$-tuple of flags $(F_0,\ldots,F_3)$ is maximal, then there must exists an element $g \in \psl(n,\bbC)$ such that
\[
g(F_0,F_1,F_2,F_3)=(\calV_n(0),\calV_n(1).\calV_n(\pm e^{\frac{i \pi}{3}}),\calV_n(\infty)) \  ,
\]
where the sign $\pm$ reflects the sign of $B_n(F_0,\ldots,F_3)$. Additionally if $B_n(F_0,F_1,F_2,F_3)=B_n(F_0,F_1,F_2,F_3')=\pm {n+1 \choose 3} \nu_3$, then $F_3=F_3'$ \cite[Corollary 20]{BBIborel}.

More generally, this rigidity result can be extended to the context of measurable maps

\begin{prop}\cite[Proposition 29]{BBIborel}\label{prop:maximal:map}
Let $\varphi: \bbP^1(\bbC) \rightarrow \scrF(n,\bbC)$ be a measurable map. Suppose that for almost every regular ideal tetrahedron $(\xi_0,\ldots,\xi_3) \in (\bbP^1(\bbC))^4$, the $4$-tuple $(\varphi(\xi_0),\ldots,\varphi(\xi_3))$ is a maximal configuration of flags. Then there must exist an element $g \in \psl(n,\bbC)$ such that $$g\varphi(\xi)=\calV_n(\xi) \ ,$$ for almost every $\xi \in \bbP^1(\bbC)$. 
\end{prop}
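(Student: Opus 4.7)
The proof strategy uses the rigidity results of \cite{BBIborel} (Theorem 19 on the $\psl(n,\bbC)$-orbit of maximal $4$-tuples, and Corollary 20 on the uniqueness of the fourth flag given the first three) to isolate a single element of $\psl(n,\bbC)$ conjugating $\varphi$ to $\calV_n$.

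First, the space of positively oriented regular ideal tetrahedra in $\bbP^1(\bbC)^4$ is a single $\psl(2,\bbC)$-orbit (with stabilizer the alternating group $A_4$ of symmetries of the tetrahedron). Projecting out one coordinate and applying Fubini, the hypothesis becomes: for almost every triple $(\xi_0,\xi_1,\xi_3)$ of distinct points in $\bbP^1(\bbC)$ the unique positively oriented regular ideal tetrahedron extending it has maximal $\varphi$-image. (I treat only the positive case; the negative case is symmetric and yields the complex conjugate Veronese embedding appearing in the statement.) By Theorem 19 of \cite{BBIborel} this forces $(\varphi(\xi_0),\varphi(\xi_1),\varphi(\xi_3))$ to lie in the $\psl(n,\bbC)$-orbit of $(\calV_n(\xi_0),\calV_n(\xi_1),\calV_n(\xi_3))$.

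Since the stabilizer of a triple of flags in general position in $\psl(n,\bbC)$ is trivial -- the stabilizer of two transverse flags is the standard maximal torus, and fixing a third generic flag further reduces this to the center of $\SL(n,\bbC)$, which vanishes in $\psl(n,\bbC)$ -- there is an almost everywhere defined measurable map
\[
h\colon \bbP^1(\bbC)^3 \longrightarrow \psl(n,\bbC), \qquad h(\xi_0,\xi_1,\xi_3)\cdot\varphi(\xi_i) = \calV_n(\xi_i),\ i\in\{0,1,3\}.
\]
A first consequence of Corollary 20 of \cite{BBIborel} is that $h$ takes the same value on all four sub-triples of a positively oriented regular ideal tetrahedron $(\xi_0,\xi_1,\xi_2,\xi_3)$ with maximal $\varphi$-image: applying $h(\xi_0,\xi_1,\xi_3)$ to the maximal $4$-tuple $(\varphi(\xi_i))$ sends the three entries $\xi_0,\xi_1,\xi_3$ to the corresponding Veronese flags, and by uniqueness of the fourth flag the remaining entry $\varphi(\xi_2)$ must also be sent to $\calV_n(\xi_2)$; hence $h(\xi_0,\xi_1,\xi_2) = h(\xi_0,\xi_1,\xi_3)$, and analogously for the other two sub-triples.

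The main obstacle is to upgrade this local compatibility to essential constancy of $h$: one needs that the equivalence relation on triples of distinct points generated by sub-triples of positively oriented regular ideal tetrahedra has a single full-measure orbit. Transporting the problem to $\psl(2,\bbC)$ via the (essentially free) triply transitive action, this amounts to showing that the three M\"obius transformations corresponding to the three nontrivial sub-triple completions generate a subgroup whose right-action on $\psl(2,\bbC)$ is ergodic with respect to Haar measure -- equivalently, that this subgroup is dense in $\psl(2,\bbC)$. Granting this (a concrete computation with the explicit M\"obius transformations), the constant value $g \in \psl(n,\bbC)$ of $h$ satisfies $g\varphi(\xi) = \calV_n(\xi)$ for almost every $\xi \in \bbP^1(\bbC)$, as required.
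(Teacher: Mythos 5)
First, a remark on scope: the paper does not prove this statement at all --- it is imported verbatim from \cite{BBIborel} (Proposition 29) --- so there is no internal proof to compare against, and I assess your argument on its own terms. The first part of your proposal is sound: by Theorem 19 of \cite{BBIborel} a maximal $4$-tuple lies in the $\psl(n,\bbC)$-orbit of the Veronese image of the standard regular ideal tetrahedron; triples of distinct Veronese flags are in general position, so their pointwise stabilizer in $\psl(n,\bbC)$ is trivial (this genericity deserves a word of justification, but it is true); and Corollary 20 of \cite{BBIborel} does force your map $h$ to take the same value on the four sub-triples of a maximal regular ideal tetrahedron.

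The argument collapses, however, at exactly the step you ``grant''. Normalize the standard positively oriented regular ideal tetrahedron to $(0,1,\zeta,\infty)$ with $\zeta=e^{i\pi/3}=1+\omega$, where $\omega=e^{2i\pi/3}$. The three M\"obius transformations implementing your sub-triple moves are $z\mapsto\frac{(1+\omega)z}{z+\omega}$ (fixing $0,1$ and sending $\infty\mapsto\zeta$), $z\mapsto(1+\omega)z$, and $z\mapsto\omega z+1$; their matrices $\left(\begin{smallmatrix}1+\omega&0\\1&\omega\end{smallmatrix}\right)$, $\left(\begin{smallmatrix}1+\omega&0\\0&1\end{smallmatrix}\right)$, $\left(\begin{smallmatrix}\omega&1\\0&1\end{smallmatrix}\right)$ have entries in the Eisenstein integers $\bbZ[\omega]$ and unit determinants ($-1$, $1+\omega$, $\omega$ respectively). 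Hence the group they generate lies inside $\textup{PGL}(2,\bbZ[\omega])$, which is a \emph{discrete} subgroup of $\psl(2,\bbC)$, commensurable with the Bianchi lattice $\psl(2,\mathcal{O}_{-3})$; this is the familiar fact that the regular ideal tetrahedron, having dihedral angle $\pi/3$, tessellates $\bbH^3_{\bbR}$. In particular this subgroup is not dense, its right-translation action on $\psl(2,\bbC)$ is not ergodic (any non-constant measurable function pulled back from the quotient orbifold is invariant), and the essential constancy of $h$ does not follow: your equivalence relation on triples has uncountably many orbits. Enlarging the move set by re-orderings of the vertices or by using the other completion point $e^{-i\pi/3}=-\omega$ does not help, as all the resulting matrices still lie in $\textup{PGL}(2,\bbZ[\omega])$. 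This is precisely the obstruction that makes dimension three the delicate case of the Gromov--Thurston approach to Mostow rigidity, and it is why the proof of \cite{BBIborel} cannot rely only on face-sharing of regular tetrahedra; one must exploit more, e.g.\ the full one-complex-parameter family of maximal tetrahedra through almost every fixed edge, which pins $\varphi$ down up to a torus-valued ambiguity that is then removed using a second edge. As written, your proposal does not prove the proposition, and the missing step is not a routine computation but the actual content of the result.
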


We will heavily exploit the previous statement in the proof of the main theorem to construct explicitly the measurable map which trivializes a maximal measurable cocycle. 

\section{Pullback along a measurable cocycle}\label{sec:pullback:cocycle}

In this section we are going to introduce the main ingredient for defining the Borel invariant of a measurable cocycle. We will show how a measurable cocycle naturally induces a map in bounded cohomology and we are going to implement the pullback using boundary maps.

Let $\Gamma \leq G$ be a torsion-free lattice in a simple Lie group and let $(X,\mu_X)$ be a standard Borel probability $\Gamma$-space. Consider a locally compact second countable group $H$. 

Given a measurable cocycle $\sigma:\Gamma \times X \rightarrow H$, we can define the following map
$$
\textup{C}^\bullet_b(\sigma):\textup{C}_{cb}^\bullet(H;\bbR) \rightarrow \textup{C}_b^\bullet(\Gamma;\bbR) \ ,
$$
$$
\psi \mapsto \textup{C}^\bullet_b(\sigma)(\psi)(\gamma_0,\ldots,\gamma_\bullet):=\int_X \psi(\sigma(\gamma_0^{-1},x)^{-1},\ldots,\sigma(\gamma_\bullet^{-1},x)^{-1})d\mu_X(x) \ .
$$
The definition given above may appear quite strange, but it is actually inspired by \cite[Theorem 5.6]{sauer:companion} and by the induction map associated to a coupling defined by Monod and Shalom \cite{MonShal}. 

\begin{lem}\label{lemma:pullback:cocycle}
Let $\sigma:\Gamma \times X \rightarrow H$ be a measurable cocycle. The map $\textup{C}^\bullet_b(\sigma)$ is a well-defined cochain map which induces a map on the subcomplexes of invariant vectors, that is
$$
\textup{C}^\bullet_b(\sigma):\textup{C}_{cb}^\bullet(H:\bbR)^H \rightarrow \textup{C}_b(\Gamma;\bbR)^\Gamma \ .
$$
As consequence it induces a map at the level of bounded cohomology groups, namely
$$
\textup{H}^\bullet_{b}(\sigma):\textup{H}^\bullet_{cb}(H;\bbR) \rightarrow \textup{H}^\bullet_b(\Gamma;\bbR) \ , \ \ \textup{H}^\bullet_b(\sigma)([\psi]):=\left[ \textup{C}^\bullet_{cb}(\sigma)(\psi) \right] \ .
$$
\end{lem}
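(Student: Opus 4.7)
The plan is to verify four things in sequence: (i) for each continuous bounded $\psi$ on $H^{\bullet+1}$ the integrand is jointly measurable in $x$ and bounded, so the formula defines a bounded function on $\Gamma^{\bullet+1}$; (ii) the assignment $\psi \mapsto \textup{C}^\bullet_b(\sigma)(\psi)$ commutes with the standard homogeneous coboundary $\delta^\bullet$; (iii) if $\psi$ is $H$-invariant, the resulting cochain is $\Gamma$-invariant; (iv) therefore it descends to a map in bounded cohomology.

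For (i), I would fix $\gamma_0, \ldots, \gamma_\bullet \in \Gamma$. Each function $x \mapsto \sigma(\gamma_i^{-1}, x)^{-1}$ is measurable from $X$ to $H$ by Definition \ref{def:measurable:cocycle} and the continuity of inversion in $H$; composing with the continuous $\psi$ yields a measurable real-valued function of $x$, bounded pointwise by $\lVert \psi \rVert_\infty$. Since $\mu_X$ is a probability measure, the integral exists and satisfies $\lVert \textup{C}^\bullet_b(\sigma)(\psi) \rVert_\infty \leq \lVert \psi \rVert_\infty$, in particular proving boundedness. For (ii), linearity of the integral and the fact that $\delta^\bullet$ is an alternating sum of coordinate omissions immediately gives
\[
\delta^\bullet \bigl( \textup{C}^\bullet_b(\sigma)(\psi) \bigr) = \textup{C}^{\bullet+1}_b(\sigma)(\delta^\bullet \psi),
\]
so $\textup{C}^\bullet_b(\sigma)$ is a cochain map.

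The main step is (iii). Applying $\gamma \in \Gamma$ to the cochain produces, at the tuple $(\gamma_0, \ldots, \gamma_\bullet)$, the integral
\[
\int_X \psi \bigl( \sigma(\gamma_0^{-1}\gamma, x)^{-1}, \ldots, \sigma(\gamma_\bullet^{-1}\gamma, x)^{-1} \bigr) d\mu_X(x).
\]
The cocycle identity \eqref{eq:cocycle} gives $\sigma(\gamma_i^{-1}\gamma, x) = \sigma(\gamma_i^{-1}, \gamma x)\, \sigma(\gamma, x)$, whence $\sigma(\gamma_i^{-1}\gamma, x)^{-1} = \sigma(\gamma, x)^{-1} \sigma(\gamma_i^{-1}, \gamma x)^{-1}$. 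Plugging this in and using the diagonal $H$-invariance of $\psi$ cancels the common left factor $\sigma(\gamma, x)^{-1}$. The integrand then becomes $\psi(\sigma(\gamma_0^{-1}, \gamma x)^{-1}, \ldots, \sigma(\gamma_\bullet^{-1}, \gamma x)^{-1})$, and because the $\Gamma$-action on $(X, \mu_X)$ is measure-preserving, the change of variables $x \mapsto \gamma x$ recovers $\textup{C}^\bullet_b(\sigma)(\psi)(\gamma_0, \ldots, \gamma_\bullet)$, establishing $\Gamma$-invariance.

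Finally, once (i)--(iii) are in place, $\textup{C}^\bullet_b(\sigma)$ restricts to a cochain map between the invariant subcomplexes, and passing to cohomology yields the desired map $\textup{H}^\bullet_b(\sigma)$. The only genuine subtlety is the manipulation in (iii): one must carefully handle the fact that the cocycle identity and measurable equivariance are only valid almost everywhere, but since the integrand is defined up to null sets in $x$ (and for fixed $\gamma, \gamma_i$ the exceptional sets combine into a single null set), Fubini-type arguments make this harmless.
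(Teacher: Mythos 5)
Your proposal is correct and follows essentially the same route as the paper: boundedness from $\mu_X$ being a probability measure, the cochain-map property by linearity, and $\Gamma$-invariance via the cocycle identity, the $H$-invariance of $\psi$, and the measure-preserving change of variables $x \mapsto \gamma x$. Your step (iii) is in fact stated more carefully than the paper's (which misattributes the cancellation to ``$\Gamma$-invariance of $\psi$'' rather than $H$-invariance), but the argument is the same.
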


\begin{proof}
It is immediate to verify that $\textup{C}^\bullet_{b}(\sigma)$ preserves boundedness, since $\mu_X$ is a probability measure. The fact that it is a cochain map is an easy computation that we leave to the reader. 

We are left to show that given a $H$-invariant continuous cochain, its image is $\Gamma$-invariant. Let $\psi \in \textup{C}^\bullet_{cb}(H;\bbR)$ and consider $\gamma,\gamma_0,\ldots,\gamma_\bullet \in \Gamma$. 

\begin{align*}
\gamma \cdot \textup{C}^\bullet_b(\sigma)(\psi)(\gamma_0,\ldots,\gamma_\bullet) &=\int_X \psi( \sigma(\gamma_0^{-1} \gamma,x)^{-1},\ldots,\sigma(\gamma_\bullet^{-1}\gamma,x)^{-1})d\mu_X(x)=\\
&=\int_X \psi(\sigma(\gamma,x)^{-1}\sigma(\gamma_0^{-1},\gamma x),\ldots )d\mu_X(x)=\\
&=\int_X \psi(\sigma(\gamma_0^{-1},x),\ldots,\sigma(\gamma_\bullet^{-1},x))d\mu_X(x)=\\
&=\textup{C}^\bullet_b(\sigma)(\psi)(\gamma_0,\ldots,\gamma_\bullet) \ .
\end{align*}
In the computation we exploited Equation \eqref{eq:cocycle} to pass from the first line to the second one. Then we used the $\Gamma$-invariance of $\psi$ and $\mu$ to move from the second line to the third one. This concludes the proof. 
\end{proof}

Thanks to the previous lemma we can give the following

\begin{deft}
Let $\sigma:\Gamma \times X \rightarrow H$ be a measurable cocycle. The map 
$$
\textup{H}^\bullet_{b}(\sigma):\textup{H}^\bullet_{cb}(H;\bbR) \rightarrow \textup{H}^\bullet_b(\Gamma;\bbR)
$$
is the \emph{cohomological pullback induced by $\sigma$}. 
\end{deft}

The first thing that we want to show is that the pullback along a measurable cocycle actually generalizes the pullback along a representation. More precisely we have the following

\begin{lem}\label{lem:pullback:representation}
Let $\sigma:\Gamma \rightarrow H$ be a representation and let $(X,\mu_X)$ be any standard Borel probability $\Gamma$-space. If $\sigma_\rho:\Gamma \times X \rightarrow H$ is the measurable cocycle associated to $\rho$, then it holds
$$
\textup{H}^\bullet_b(\sigma_\rho)=\textup{H}^\bullet_b(\rho) \ .
$$
\end{lem}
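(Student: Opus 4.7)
The plan is to unfold both pullback maps at the cochain level and observe that they coincide pointwise, whence the statement in cohomology follows immediately.

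First I would recall the definition of the cocycle associated to a representation: $\sigma_\rho(\gamma, x) = \rho(\gamma)$ for every $\gamma \in \Gamma$ and almost every $x \in X$. This implies in particular that $\sigma_\rho(\gamma^{-1}, x)^{-1} = \rho(\gamma^{-1})^{-1} = \rho(\gamma)$ for every $\gamma \in \Gamma$ and almost every $x \in X$, since $\rho$ is a group homomorphism.

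Next, I would fix a cochain $\psi \in \textup{C}^\bullet_{cb}(H; \bbR)^H$ and compute
\[
\textup{C}^\bullet_b(\sigma_\rho)(\psi)(\gamma_0, \ldots, \gamma_\bullet) = \int_X \psi(\sigma_\rho(\gamma_0^{-1}, x)^{-1}, \ldots, \sigma_\rho(\gamma_\bullet^{-1}, x)^{-1}) \, d\mu_X(x) = \int_X \psi(\rho(\gamma_0), \ldots, \rho(\gamma_\bullet)) \, d\mu_X(x).
\]
The integrand is independent of $x$, and since $\mu_X$ is a probability measure the integral equals $\psi(\rho(\gamma_0), \ldots, \rho(\gamma_\bullet)) = \textup{C}^\bullet_b(\rho)(\psi)(\gamma_0, \ldots, \gamma_\bullet)$. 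Hence the two cochain maps $\textup{C}^\bullet_b(\sigma_\rho)$ and $\textup{C}^\bullet_b(\rho)$ coincide as maps $\textup{C}^\bullet_{cb}(H; \bbR)^H \to \textup{C}^\bullet_b(\Gamma; \bbR)^\Gamma$.

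Finally, I would conclude by passing to the cohomology of the subcomplexes of invariants: since the two cochain maps agree, the induced maps in bounded cohomology coincide, yielding $\textup{H}^\bullet_b(\sigma_\rho) = \textup{H}^\bullet_b(\rho)$. There is no real obstacle here; the content of the lemma is simply that the definition of pullback along a measurable cocycle was set up so as to specialize correctly to the representation case, and this comes for free from the probability normalization of $\mu_X$ together with the fact that $\sigma_\rho$ does not depend on the $X$-variable.
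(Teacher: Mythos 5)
Your proposal is correct and follows essentially the same route as the paper: unfold $\textup{C}^\bullet_b(\sigma_\rho)$ at the cochain level, use $\sigma_\rho(\gamma^{-1},x)^{-1}=\rho(\gamma)$ and the fact that $\mu_X$ is a probability measure to see the integral collapses to $\textup{C}^\bullet_b(\rho)(\psi)$, then pass to cohomology. Nothing is missing.
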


\begin{proof}
We are going to verify that 
$$
\textup{C}^\bullet_b(\sigma_\rho)=\textup{C}^\bullet_b(\rho) \ .
$$
Let $\psi \in \textup{C}^\bullet_{cb}(H;\bbR)$ and consider $\gamma_0,\ldots,\gamma_\bullet \in \Gamma$. We have 
\begin{align*}
\textup{C}^\bullet_b(\sigma_\rho)(\psi)(\gamma_0,\ldots,\gamma_\bullet)&=\int_X \psi(\sigma_\rho(\gamma_0^{-1},x)^{-1},\ldots,\sigma_\rho(\gamma_\bullet^{-1},x)^{-1})d\mu_X(x)=\\
&=\int_X \psi(\rho(\gamma_0),\ldots,\rho(\gamma_\bullet))d\mu_X(x)=\\
&=\psi(\rho(\gamma_0),\ldots,\rho(\gamma_\bullet))=\textup{C}^\bullet_b(\rho)(\psi)(\gamma_0,\ldots,\gamma_\bullet) \ .
\end{align*}
In the computation we first used the definition of the cocycle $\sigma_\rho$ to move from the first line to the second one and then we concluded exploiting the fact that $\mu_X$ is a probability measure. The statement now follows. 
\end{proof}

A question which might arise quite naturally could be how the cohomological pullback varies along a the $H$-cohomology class of the cocycle $\sigma$. We are going to show that it is actually constant. 

\begin{lem}\label{lem:pullback:cohomology}
Let $\sigma:\Gamma \times X \rightarrow H$ be a measurable cocycle. Then for any measurable map $f:X \rightarrow H$ it holds
$$
\textup{H}^\bullet_b(\sigma^f)=\textup{H}^\bullet_b(\sigma) \ .
$$
Here $\sigma^f$ is the $f$-twisted cocycle associated to $\sigma$. 
\end{lem}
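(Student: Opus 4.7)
The plan is to exhibit a bounded, $\Gamma$-equivariant chain homotopy between the cochain maps $\textup{C}^\bullet_b(\sigma)$ and $\textup{C}^\bullet_b(\sigma^f)$, and then pass to $\Gamma$-invariants and to cohomology. Setting $A(\gamma,x):=\sigma(\gamma^{-1},x)^{-1}$, the definition of the twisted cocycle gives $\sigma^f(\gamma^{-1},x)^{-1}=f(x)^{-1}A(\gamma,x)f(\gamma^{-1}x)$. The first step is to use the $H$-invariance of $\psi\in\textup{C}^k_{cb}(H;\bbR)^H$ to absorb the common left factor $f(x)^{-1}$, rewriting
$$\textup{C}^k_b(\sigma^f)(\psi)(\gamma_0,\ldots,\gamma_k)=\int_X\psi\bigl(A(\gamma_0,x)f(\gamma_0^{-1}x),\ldots,A(\gamma_k,x)f(\gamma_k^{-1}x)\bigr)\,d\mu_X(x).$$
In this form it becomes apparent that $\textup{C}^\bullet_b(\sigma^f)(\psi)$ differs from $\textup{C}^\bullet_b(\sigma)(\psi)$ only by a right-multiplication of each entry by the family $\{f(\gamma_i^{-1}x)\}_{i}$.

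Mimicking the simplicial prism decomposition of $\Delta^k\times[0,1]$, I would then define the candidate chain homotopy $h^k:\textup{C}^k_{cb}(H;\bbR)\to\textup{C}^{k-1}_b(\Gamma;\bbR)$ by
\begin{align*}
h^k(\psi)(\gamma_0,\ldots,\gamma_{k-1}):=\sum_{i=0}^{k-1}(-1)^i\int_X\psi\bigl(&A(\gamma_0,x),\ldots,A(\gamma_i,x),\\
&A(\gamma_i,x)f(\gamma_i^{-1}x),\ldots,A(\gamma_{k-1},x)f(\gamma_{k-1}^{-1}x)\bigr)\,d\mu_X(x).
\end{align*}
Boundedness of $h^k$ is immediate from the boundedness of $\psi$ and the fact that $\mu_X$ is a probability measure. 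For the $\Gamma$-equivariance, that is, to show that $h^k$ sends $\textup{C}^k_{cb}(H;\bbR)^H$ into $\textup{C}^{k-1}_b(\Gamma;\bbR)^\Gamma$, I would repeat verbatim the argument in the proof of Lemma \ref{lemma:pullback:cocycle}: the cocycle identity extracts a common factor $\sigma(\gamma,x)^{-1}$ from every entry, which is removed by the $H$-invariance of $\psi$, and the remaining translation $x\mapsto\gamma x$ is absorbed by the $\Gamma$-invariance of $\mu_X$.

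The main step, which I expect to be the principal obstacle, is to verify the prism identity
$$\delta^{k-1}h^k(\psi)+h^{k+1}(\delta^k\psi)=\textup{C}^k_b(\sigma^f)(\psi)-\textup{C}^k_b(\sigma)(\psi)$$
at the cochain level. This is a purely combinatorial computation: after expanding $\delta^k\psi$ in the definition of $h^{k+1}(\delta^k\psi)$, the internal faces of the prism cancel pairwise thanks to the doubled argument $A(\gamma_i,x)$ appearing in consecutive summands, and the only surviving contributions are the top and bottom faces, matching the right-hand side. A direct check in degree $k=1$ fixes the sign convention, and the general case is the standard simplicial prism argument. Once this identity holds, restricting to the $\Gamma$-invariant subcomplexes and passing to cohomology yields $\textup{H}^\bullet_b(\sigma^f)=\textup{H}^\bullet_b(\sigma)$, as desired.
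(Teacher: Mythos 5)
Your proposal is correct and follows essentially the same route as the paper: both first absorb the common left factor $f(x)^{-1}$ using the $H$-invariance of $\psi$, and then build the standard prism (interpolation) chain homotopy with doubled $i$-th arguments, exactly as in Monod's Lemma 8.7.2, deferring the telescoping verification to that standard computation. The only difference is cosmetic — you interpolate from the untwisted to the twisted entries while the paper goes the other way, which merely flips the overall sign in the homotopy identity.
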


\begin{proof}
The proof will follow the line of \cite[Lemma 8.7.2]{monod:libro}. We want to show that there exists a chain homotopy between $\textup{C}^\bullet_b(\sigma)$ and $\textup{C}^\bullet_b(\sigma^f)$. 

Fix a continuous cochain $\psi \in \textup{C}_{cb}^\bullet(H;\bbR)^H$. We have
\begin{align}\label{eq:cochain:sigmaf}
\textup{C}^\bullet_{cb}(\sigma^f)(\psi)(\gamma_0,\ldots,\gamma_\bullet)&=\int_\Omega \psi(\sigma^f(\gamma_0^{-1},s)^{-1},\ldots,\sigma^f(\gamma_\bullet^{-1},s)^{-1})d\mu_\Omega=\\
&=\int_\Omega \psi(f(s)^{-1}\sigma(\gamma_0^{-1},s)^{-1}f(\gamma_0^{-1}s),\dots )d\mu_\Omega = \nonumber \\
&=\int_\Omega \psi(\sigma(\gamma_0^{-1},s)^{-1}f(\gamma_0^{-1}s),\ldots )d\mu_\Omega \nonumber \ .
\end{align}
Here we used the definition of $\sigma^f$ to move from the first line to the second one and then we exploited the $H$-invariance of $\psi$.  

For $0 \leq i \leq \bullet-1$ we now define the following map
$$
s^\bullet_i(\sigma,f):\textup{C}^\bullet_{cb}(H;\bbR) \rightarrow \textup{C}^{\bullet - 1}_b(\Gamma;\bbR) \ , \ \ s^\bullet_i(\sigma,f)(\psi)(\gamma_0,\ldots,\gamma_{\bullet-1}):=
$$
$$
=\int_\Omega \psi(\sigma(\gamma_0^{-1},s)^{-1}f(\gamma_0^{-1}s),\ldots,\sigma(\gamma_i^{-1},s)^{-1}f(\gamma_i^{-1}s),\sigma(\gamma_i^{-1},s)^{-1},\ldots,\sigma(\gamma_{\bullet-1}^{-1},s)^{-1})d\mu_\Omega(s) \ ,
$$
and we set $s^\bullet(\sigma,f):=\sum_{i=0}^{\bullet-1}(-1)^i s^\bullet_i(\sigma,f)$. By defining for $-1 \leq i \leq \bullet$ the map
$$
\rho^\bullet_i(\sigma,f):\textup{C}^\bullet_{cb}(H;\bbR) \rightarrow \textup{C}^{\bullet}_b(\Gamma;\bbR) \ , \ \ 
\rho^\bullet_i(\sigma,f)(\psi)(\gamma_0,\ldots,\gamma_\bullet):=
$$
$$
=\int_\Omega \psi(\sigma(\gamma_0^{-1},s)^{-1}f(\gamma_0^{-1}s),\ldots,\sigma(\gamma_i^{-1},s)^{-1}f(\gamma^{-1}_i s),\sigma(\gamma^{-1}_{i+1},s)^{-1},\ldots,\sigma(\gamma^{-1}_\bullet,s)^{-1})d\mu_\Omega(s) \ ,
$$
we can notice that $\rho^\bullet_{-1}(\sigma,f)=\textup{C}^\bullet_b(\sigma)$. A similar computation of \cite[Lemma 8.7.2]{monod:libro} readily implies that 
\begin{align*}
s^{\bullet+1}(\sigma,f)\delta^{\bullet}&=-\delta^{\bullet}s^{\bullet}(\sigma,f)+\sum_{i=0}^{\bullet}(\rho_{i-1}^{\bullet}(\sigma,f)-\rho^{\bullet}_i)=\\
&=-\delta^{\bullet}s^{\bullet}(\sigma,f)+\textup{C}^\bullet_b(\sigma)- \rho^\bullet_\bullet(\sigma,f) \ ,
\end{align*}
where $\delta^\bullet$ is the usual homogeneous coboundary operator. Since by Equation \eqref{eq:cochain:sigmaf} on the subcomplex of $H$-invariants cochains it holds
$$
\rho^\bullet_\bullet(\sigma,f)=\textup{C}^\bullet_b(\sigma^f) \ ,
$$
we get that 
$$
s^{\bullet+1}(\sigma,f)\delta^{\bullet}+\delta^{\bullet}s^{\bullet}(\sigma,f)=\textup{C}^\bullet_b(\sigma)- \textup{C}^\bullet_b(\sigma^f) \ ,
$$
and the claim follows. 
\end{proof}

Now we want to implement the pullback in terms of boundary maps. Consider a measure space $(Y,\nu)$ on which $H$ acts by preserving the measure class of $\nu$. Let $\phi:B(G) \times X \rightarrow Y$ be a boundary map for $\sigma$. Then we can consider the following $\textup{L}^\infty(X)$-valued pullback
$$
\textup{C}^\bullet(\phi):\mathcal{B}^\infty(Y^{\bullet+1};\bbR) \rightarrow \textup{L}^\infty_{\textup{w}^\ast}(B(G)^{\bullet+1};\textup{L}^\infty(X)) \ ,
$$
$$
\psi \mapsto \textup{C}^\bullet(\phi)(\psi)(\xi_0,\ldots,\xi_\bullet)(x):=\psi(\phi(\xi_0,x),\ldots,\phi(\xi_\bullet,x)) \ . 
$$

\begin{lem}
The map $\textup{C}^\bullet(\phi)$ is well-defined and induces a map between the subcomplexes of invariant vectors, that is
$$
\textup{C}^\bullet(\phi):\mathcal{B}^\infty(Y^{\bullet+1};\bbR)^H \rightarrow \textup{L}^\infty_{\textup{w}^\ast}(B(G)^{\bullet+1};\textup{L}^\infty(X))^\Gamma \ .
$$
Thus there exists a well-defined map in cohomology
$$
\textup{H}^\bullet(\phi):\textup{H}^\bullet(\mathcal{B}^\infty(Y^{\bullet+1};\bbR)^H) \rightarrow \textup{H}^\bullet_b(\Gamma;\textup{L}^\infty(X)) \ .
$$
\end{lem}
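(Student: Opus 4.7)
The plan is to verify three things in sequence: first, that $\textup{C}^\bullet(\phi)(\psi)$ genuinely lies in $\textup{L}^\infty_{\textup{w}^\ast}(B(G)^{\bullet+1};\textup{L}^\infty(X))$ for every $\psi \in \mathcal{B}^\infty(Y^{\bullet+1};\bbR)$; second, that when $\psi$ is $H$-invariant the resulting cochain is $\Gamma$-invariant for the diagonal action; and third, that $\textup{C}^\bullet(\phi)$ commutes with the standard homogeneous coboundary $\delta^\bullet$. The last point is purely formal, because $\delta^\bullet$ simply omits one entry among $\xi_0,\ldots,\xi_{\bullet+1}$ while the pullback operation acts entry by entry; once it is checked, the induced cohomology map $\textup{H}^\bullet(\phi)$ is automatic.

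The heart of the statement is the equivariance step. The diagonal $\Gamma$-action reads
\[
(\gamma f)(\xi_0,\ldots,\xi_\bullet)(x) = f(\gamma^{-1}\xi_0,\ldots,\gamma^{-1}\xi_\bullet)(\gamma^{-1}x) \ .
\]
I would rearrange the $\sigma$-equivariance from Definition \ref{def:boundary:map} exactly as in the proof of Lemma \ref{lemma:pullback:cocycle} to obtain $\phi(\gamma^{-1}\xi, \gamma^{-1}x) = \sigma(\gamma^{-1},x)\phi(\xi,x)$ for almost every $\xi \in B(G)$ and $x \in X$. Substituting this entrywise in $\psi(\phi(\gamma^{-1}\xi_0, \gamma^{-1}x),\ldots,\phi(\gamma^{-1}\xi_\bullet, \gamma^{-1}x))$ produces the \emph{same} element $\sigma(\gamma^{-1},x) \in H$ in every slot, so the $H$-invariance of $\psi$ cancels it and recovers $\textup{C}^\bullet(\phi)(\psi)(\xi_0,\ldots,\xi_\bullet)(x)$, which proves the $\Gamma$-invariance.

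The main technical obstacle is the well-definedness step. Boundedness is trivial, since the composite is bounded pointwise by $\lVert \psi \rVert_\infty$; the delicate part is to check that the assignment $(\xi_0,\ldots,\xi_\bullet) \mapsto [x \mapsto \psi(\phi(\xi_0,x),\ldots,\phi(\xi_\bullet,x))]$ is weak-$^\ast$ measurable into $\textup{L}^\infty(X)$. I would handle this by combining joint measurability of $\phi$ with the measurability of $\psi$ to obtain joint measurability of the composite on $B(G)^{\bullet+1} \times X$; Fubini then yields, for almost every tuple, a measurable $X$-slice, and integration against an arbitrary $h \in \textup{L}^1(X)$ produces a measurable function of $(\xi_0,\ldots,\xi_\bullet)$, which is precisely the definition of weak-$^\ast$ measurability. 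The null sets must be tracked carefully so that the equivariance identity for $\phi$ and hence the $\Gamma$-invariance of $\textup{C}^\bullet(\phi)(\psi)$ survive passage to the quotient by measure-zero sets.
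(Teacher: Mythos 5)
Your proposal is correct and follows essentially the same route as the paper: the same identification of $\textup{L}^\infty_{\textup{w}^\ast}(B(G)^{\bullet+1};\textup{L}^\infty(X))^\Gamma$ with diagonally invariant functions, the same entrywise use of the $\sigma$-equivariance $\phi(\gamma^{-1}\xi,\gamma^{-1}x)=\sigma(\gamma^{-1},x)\phi(\xi,x)$ followed by cancellation via $H$-invariance of $\psi$, and the same formal verification of the cochain-map property. The only differences are cosmetic: you spell out the weak-$^\ast$ measurability argument that the paper dismisses as immediate, while the paper is more explicit that the passage to $\textup{H}^\bullet_b(\Gamma;\textup{L}^\infty(X))$ rests on Theorem \ref{teor:resolution:boundary}.
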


\begin{proof}
The fact that $\textup{C}^\bullet(\phi)$ is a cochain map which preserves boundedness is immediate. We are going to check that it restricts to a map between the subcomplexes of invariant vectors. First recall that we have an identification 
$$
 \textup{L}^\infty_{\textup{w}^\ast}(B(G)^{\bullet+1};\textup{L}^\infty(X))^\Gamma \cong \textup{L}^\infty(B(G)^{\bullet+1} \times X)^\Gamma \ ,
$$
where $\Gamma$ acts on the right-hand side diagonally. Let $\psi \in \mathcal{B}^\infty(Y^{\bullet+1};\bbR)^H$ and consider $\gamma \in \Gamma$. 
\begin{align*}
\gamma \cdot \textup{C}^\bullet(\phi)(\psi)(\xi_0,\ldots,\xi_\bullet)(x)&=\textup{C}^\bullet(\phi)(\psi)(\gamma^{-1}\xi_0,\ldots,\gamma^{-1}\xi_\bullet)(\gamma^{-1}x)=\\
&=\psi(\phi(\gamma^{-1}\xi_0,\gamma^{-1}x),\ldots,\phi(\gamma^{-1}\xi_\bullet,\gamma^{-1}x))=\\
&=\psi(\sigma(\gamma^{-1},x)\phi(\xi_0,x),\ldots,\sigma(\gamma^{-1},x)\phi(\xi_\bullet,x))=\\
&=\textup{C}^\bullet(\phi)(\psi)(\xi_0,\ldots,\xi_\bullet)(x) \ .
\end{align*}
In the computation above we used the $\sigma$-equivariance of $\phi$ to move from the second line to the third one and we concluded exploiting the $H$-invariance of $\psi$. 

Since by Theorem \ref{teor:resolution:boundary} the complex $(\textup{L}^\infty_{\textup{w}^\ast}(B(G)^{\bullet+1};\textup{L}^\infty(X))^\Gamma,\delta^\bullet)$ computes isometrically the continuous bounded cohomology $\textup{H}^\bullet_b(\Gamma;\textup{L}^\infty(X))$, the statement follows.  
\end{proof}

Now we can compose the map $\textup{H}^\bullet(\phi)$ with the map induced at the level of bounded cohomology by the integration
$$
\textup{I}_X^\bullet:\textup{L}^\infty_{\textup{w}^\ast}(B(G)^{\bullet+1};\textup{L}^\infty(X))^\Gamma \rightarrow \textup{L}^\infty(B(G)^{\bullet+1};\bbR)^\Gamma \ , 
$$
$$
\psi \mapsto \textup{I}_X^\bullet(\psi)(\xi_0,\ldots,\xi_\bullet):=\int_X \psi(\xi_0,\ldots,\xi_\bullet)d\mu_X(x) \ .
$$
It is easy to verify that $\textup{I}_X$ is a cochain map which preserves the boundedness and it restricts naturally to the subcomplexes of $\Gamma$-invariant vectors. In particular we have a well-defined map 
$$
\textup{I}_X^\bullet:\textup{H}^\bullet_b(\Gamma;\textup{L}^\infty(X)) \rightarrow \textup{H}^\bullet_b(\Gamma;\bbR) \ .
$$

\begin{deft}
Let $\sigma:\Gamma \times X \rightarrow H$ be a measurable cocycle with boundary map $\phi:B(G) \times X \rightarrow Y$. The \emph{cohomological pullback induced by $\phi$} is the map 
$$
\textup{H}^\bullet(\Phi^X):\textup{H}^\bullet(\mathcal{B}^\infty(Y^{\bullet+1};\bbR)^H)  \rightarrow \textup{H}^\bullet_b(\Gamma;\bbR) , \ \ \textup{H}^\bullet(\Phi^X)([\psi]):=\textup{I}_X^\bullet \circ \textup{H}^\bullet(\phi)([\psi]) \ .
$$
\end{deft}

We want to conclude this section by showing the compatibility between the two pullback maps. We have the following result which should be considered as an adaptation of \cite[Corollary 2.7]{burger:articolo}. 

\begin{lem}\label{lem:pullback:cocycle:boundary}
Let $\sigma:\Gamma \times X \rightarrow H$ be a measurable cocycle with a boundary map $\phi:B(G) \times X \rightarrow Y$. Given $\psi \in \calB^\infty(Y^{\bullet+1};\bbR)^H$, then 
$$
\textup{C}^\bullet(\Phi^X)(\psi) \in \textup{L}^\infty(B(G)^{\bullet+1};\mathbb{R})^\Gamma \ ,
$$  
is a natural representative of the class $\textup{H}^\bullet_b(\sigma)([\psi]) \in \textup{H}^\bullet_{b}(\Gamma;\bbR)$. 
\end{lem}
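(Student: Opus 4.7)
The plan is to reduce the statement to the standard uniqueness, up to equivariant chain homotopy, of chain maps between strong resolutions extending a fixed coefficient morphism. By Proposition \ref{prop:strong:resolution} the canonical map $\mathfrak{c}^\bullet$ sends the class of $\psi$ in $\textup{H}^\bullet(\calB^\infty(Y^{\bullet+1};\bbR)^H)$ to a class in $\textup{H}^\bullet_{cb}(H;\bbR)$, so $\textup{H}^\bullet_b(\sigma)([\psi])$ is well-defined as the image of this latter class under the cocycle pullback. By Theorem \ref{teor:resolution:boundary}, $\textup{H}^\bullet_b(\Gamma;\bbR)$ is isometrically realized by the cohomology of $\textup{L}^\infty(B(G)^{\bullet+1};\bbR)^\Gamma$, which is precisely the ambient complex of $\textup{C}^\bullet(\Phi^X)(\psi)$; the assertion is therefore a statement about chain-level representatives.

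First I would fix an $H$-equivariant chain map $\kappa^\bullet \colon \calB^\infty(Y^{\bullet+1};\bbR) \to \textup{C}^\bullet_{cb}(H;\bbR)$ extending $\id_\bbR$, produced by the strong-resolution property of Proposition \ref{prop:strong:resolution}; this realizes $\mathfrak{c}^\bullet$ at the cochain level. Analogously I would fix a $\Gamma$-equivariant chain map $\nu^\bullet \colon \textup{C}^\bullet_b(\Gamma;\bbR) \to \textup{L}^\infty(B(G)^{\bullet+1};\bbR)$ extending $\id_\bbR$, which realizes the isometric isomorphism of Theorem \ref{teor:resolution:boundary}. The composition $\nu^\bullet \circ \textup{C}^\bullet_b(\sigma) \circ \kappa^\bullet(\psi)$ is then, by construction, a representative of $\textup{H}^\bullet_b(\sigma)([\psi])$ inside $\textup{L}^\infty(B(G)^{\bullet+1};\bbR)^\Gamma$. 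It remains to verify that this representative differs from $\textup{C}^\bullet(\Phi^X)(\psi) = \textup{I}_X^\bullet \circ \textup{C}^\bullet(\phi)(\psi)$ only by a coboundary.

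To establish this I would factor the discussion through the $\textup{L}^\infty(X)$-valued intermediate complex $\textup{L}^\infty_{\textup{w}^\ast}(B(G)^{\bullet+1};\textup{L}^\infty(X))^\Gamma$ of Theorem \ref{teor:resolution:boundary}, which computes $\textup{H}^\bullet_b(\Gamma;\textup{L}^\infty(X))$. On this complex both constructions furnish $\Gamma$-equivariant lifts whose underlying coefficient morphism is the canonical inclusion $\bbR \hookrightarrow \textup{L}^\infty(X)$ sending scalars to constants; composing with $\textup{I}_X^\bullet$ gives back the identity on $\bbR$, since $\mu_X$ is a probability measure. A fundamental-lemma-type uniqueness argument—parallel to the explicit chain-homotopy construction of Lemma \ref{lem:pullback:cohomology} and to the representation-level prototype \cite[Corollary 2.7]{burger:articolo}—then provides the required $\Gamma$-equivariant chain homotopy between the two lifts, and evaluating it on $\psi$ finishes the argument.

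The main obstacle is this last comparison. The source $\calB^\infty(Y^{\bullet+1};\bbR)^H$ is $H$-equivariantly a strong resolution but not a $\Gamma$-equivariant one, so the standard fundamental lemma does not apply directly. The workaround, mirroring the representation case in \cite{burger:articolo}, is precisely to promote everything to $\textup{L}^\infty(X)$-coefficients via $\phi$, where the $\sigma$-equivariance of the boundary map makes the boundary pullback canonically compatible with the cocycle pullback, and only afterwards descend via $\textup{I}_X^\bullet$; checking that this homotopy passes through the integration step and lands in the essentially bounded cochains on $B(G)^{\bullet+1}$ is where the bulk of the technical effort concentrates.
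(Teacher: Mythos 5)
Your proposal is correct and follows essentially the same route as the paper: the paper's entire proof consists of citing \cite[Proposition 1.2]{burger:articolo} to obtain the commutative triangle relating $\textup{H}^\bullet(\Phi^X)$, $\mathfrak{c}^\bullet$ and $\textup{H}^\bullet_b(\sigma)$, which is exactly the fundamental-lemma uniqueness statement for morphisms of strong resolutions that you unpack at the chain level. Your further observation---that the comparison must be routed through $\textup{L}^\infty(X)$-coefficients and the integration map $\textup{I}^\bullet_X$ because $\calB^\infty(Y^{\bullet+1};\bbR)$ carries no $\Gamma$-module structure via the cocycle $\sigma$---correctly identifies the technical content that the paper delegates entirely to that citation.
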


\begin{proof}
As a consequence of \cite[Proposition 1.2]{burger:articolo} we get the following commutative diagram
$$
\xymatrix{
\textup{H}^\bullet(\calB^\infty(Y^{\bullet+1};\bbR)^H) \ar[dd]^{\mathfrak{c}^\bullet} \ar[rrr]^{\textup{H}^\bullet(\Phi^X)} &&& \textup{H}^\bullet_b(\Gamma;\bbR) \\
\\
\textup{H}^\bullet_{cb}(H;\bbR) \ar[uurrr]^{\textup{H}^\bullet_b(\sigma)} &&& \ ,
}
$$
and the statement follows. 
\end{proof}

\section{The Borel invariant for a measurable cocycle}

After having introduced the machinery that we needed, we are finally ready to define the Borel invariant for a measurable cocycle. We are going to do it only for a torsion-free non-uniform lattice. The case of uniform lattices is actually easier and can be obtained from the non-uniform one with some slight modifications. 

Let $\Gamma \leq \psl(2,\bbC)$ be a torsion-free non-uniform lattice and denote by $M=\Gamma \backslash \bbH^3_{\bbR}$ the quotient manifold associated to $\Gamma$. We consider $N \subset M$ a \emph{compact core} of $M$, that is the complement of the disjoint union of some horocyclic neighborhoods of the cusps. Since the fundamental group of the boundary $\partial N$ is virtually abelian, hence amenable, by \cite{BBFIPP} there exists an isometric isomorphism $$\textup{H}^\bullet_b(j):\textup{H}^\bullet_b(M, M \setminus N;\bbR) \rightarrow \textup{H}^\bullet_b(M;\bbR) \ , $$ provided that $\bullet \geq 3$. Here $\textup{H}^\bullet_b(j)$ is the map induced by the inclusion of the pair $j:(M,\varnothing) \rightarrow (M,M\setminus N)$. 

Thus we can consider the composition 
\begin{equation}\label{eq:composition}
\textup{H}^\bullet_b(\Gamma ;\bbR) \cong \textup{H}^\bullet_b(M;\bbR) \rightarrow \textup{H}^\bullet_b(M,M \setminus N;\bbR) \cong \textup{H}^\bullet_b(N,\partial N;\bbR) \ ,
\end{equation}
where the isomorphism on the left is due to the Gromov's Mapping Theorem \cite{Grom82,Ivanov}, the map in the middle is the inverse of $\textup{H}^\bullet_b(j)$ and the isomorphism on the right holds since the pairs are homotopically equivalent. We are going to denote the composition of Equation \eqref{eq:composition} by $\textup{J}^\bullet$. 

\begin{deft}\label{def:borel:invariant}
Let $\Gamma \leq \psl(2,\bbC)$ be a torsion-free non-uniform lattice and let $(X,\mu_X)$ be a standard Borel probability space. Given a measurable cocycle $\sigma:\Gamma \times X \rightarrow \psl(n,\bbC)$ we define the \emph{Borel invariant associated to $\sigma$} as 
$$
\beta_n(\sigma):=\langle \textup{comp}_{N,\partial N}^3 \circ \textup{J}^3 \circ \textup{H}^3_b(\sigma)(\beta_b(n)), [N,\partial N] \rangle \ .
$$
Here $\langle \ \cdot , \cdot \ \rangle$ is the Kronecker pairing, $\textup{comp}^3_{N,\partial N}$ is the comparison map of the pair $(N,\partial N)$ and $[N,\partial N] \in \textup{H}_3(N,\partial N;\bbR)$ is a fixed relative fundamental class. 
\end{deft}

The first natural question could be how the Borel invariant depends on the choice of the compact core $N$. Since the Borel invariant will satisfy the integral formula of Proposition \ref{prop:formula}, this will imply in particular that $\beta_n(\sigma)$ actually does not depend on the choice of $N$. 

The first thing that we are going to show is that our definition of the Borel invariant actually extends the one given by Bucher, Burger and Iozzi for representations.

\begin{prop}\label{prop:borel:representation}
Let $\Gamma \leq \psl(2,\bbC)$ be a torsion-free non-uniform lattice and let $\rho:\Gamma \rightarrow \psl(n,\bbC)$ be a representation. Given a standard Borel probability $\Gamma$-space $(X,\mu_X)$, if $\sigma_\rho:\Gamma \times X \rightarrow \psl(n,\bbC)$ is the measurable cocycle associated to $\rho$, it holds
$$
\beta_n(\sigma_\rho)=\beta_n(\rho) \ .
$$
\end{prop}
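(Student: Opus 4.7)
The plan is to reduce the statement directly to Lemma \ref{lem:pullback:representation}. First I would recall the definition of the Borel invariant of a representation $\rho:\Gamma \rightarrow \psl(n,\bbC)$ in the sense of Bucher, Burger and Iozzi: it is given by the same formula as Definition \ref{def:borel:invariant}, namely
\[
\beta_n(\rho) = \langle \textup{comp}_{N,\partial N}^3 \circ \textup{J}^3 \circ \textup{H}^3_b(\rho)(\beta_b(n)), [N,\partial N] \rangle,
\]
where the only difference is that the cohomological pullback $\textup{H}^3_b(\sigma)$ induced by a cocycle is replaced by the ordinary bounded pullback $\textup{H}^3_b(\rho):\textup{H}^3_{cb}(\psl(n,\bbC);\bbR) \to \textup{H}^3_b(\Gamma;\bbR)$ induced by the homomorphism $\rho$.

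Next I would appeal to Lemma \ref{lem:pullback:representation}, which states precisely that $\textup{H}^\bullet_b(\sigma_\rho) = \textup{H}^\bullet_b(\rho)$ as maps on bounded cohomology, for any choice of standard Borel probability $\Gamma$-space $(X,\mu_X)$. Evaluating both sides of this equality on the bounded Borel class $\beta_b(n) \in \textup{H}^3_{cb}(\psl(n,\bbC);\bbR)$ and then composing with the remaining ingredients of Definition \ref{def:borel:invariant} --- the map $\textup{J}^3$ of Equation \eqref{eq:composition}, the comparison map $\textup{comp}^3_{N,\partial N}$, and the Kronecker pairing with the fixed relative fundamental class $[N,\partial N]$, none of which depend on $\rho$ or $\sigma_\rho$ --- immediately yields $\beta_n(\sigma_\rho) = \beta_n(\rho)$.

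The argument is essentially a one-line deduction once Lemma \ref{lem:pullback:representation} is in place; there is no genuine obstacle at this stage. The real content of the proposition is the sanity check that the invariant introduced in Definition \ref{def:borel:invariant} extends the classical Borel invariant for representations, as already announced in the paragraph after Theorem \ref{teor:rigidity}.
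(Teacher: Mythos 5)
Your proposal is correct and follows exactly the paper's own argument: both reduce the statement to Lemma \ref{lem:pullback:representation} and then observe that the remaining ingredients of Definition \ref{def:borel:invariant} are independent of whether one pulls back along $\rho$ or $\sigma_\rho$. Nothing further is needed.
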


\begin{proof}
The proof is an immediate consequence of Lemma \ref{lem:pullback:representation}. Indeed we have
\begin{align*}
\beta_n(\sigma_\rho)&=\langle \textup{comp}^3_{N, \partial N} \circ \textup{J}^3 \circ \textup{H}^3_b(\sigma_\rho)(\beta_b(n)),[N,\partial N] \rangle\\
&=\langle \textup{comp}^3_{N, \partial N} \circ \textup{J}^3 \circ \textup{H}^3_b(\rho)(\beta_b(n)),[N,\partial N] \rangle=\beta(\rho) \ , 
\end{align*}
and the statement follows. 
\end{proof}

We now want to show that the Borel invariant remains unchanged along the $\psl(n,\bbC)$-cohomology class of a fixed measurable cocycle. 

\begin{prop}\label{prop:borel:cohomology}
Let $\Gamma \leq \psl(2,\bbC)$ be a torsion-free non-uniform lattice and let $(X,\mu_X)$ be a standard Borel probability $\Gamma$-space. Given a measurable cocycle $\sigma:\Gamma \times X \rightarrow \psl(n,\bbC)$ and any measurable map $f:X \rightarrow H$ it holds
$$
\beta_n(\sigma^f)=\beta_n(\sigma) \ .
$$
\end{prop}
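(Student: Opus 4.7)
The proof is essentially a one-line consequence of Lemma \ref{lem:pullback:cohomology}, and the plan is to isolate the only $\sigma$-dependent factor in the definition of $\beta_n(\sigma)$ and then invoke that lemma.

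First I would inspect the right-hand side of Definition \ref{def:borel:invariant}, namely
$$
\beta_n(\sigma)=\langle \textup{comp}^3_{N,\partial N} \circ \textup{J}^3 \circ \textup{H}^3_b(\sigma)(\beta_b(n)),\,[N,\partial N]\rangle \ ,
$$
and point out that the comparison map $\textup{comp}^3_{N,\partial N}$, the map $\textup{J}^3$ assembled in \eqref{eq:composition}, the bounded Borel class $\beta_b(n) \in \textup{H}^3_{cb}(\psl(n,\bbC);\bbR)$, and the relative fundamental class $[N,\partial N] \in \textup{H}_3(N,\partial N;\bbR)$ all depend solely on $\Gamma$, on the compact core $N$, and on $n$. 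In particular, none of them feels the replacement of $\sigma$ by $\sigma^f$. Hence the only factor in the pairing that could possibly vary with the choice of representative in the cohomology class of $\sigma$ is the pullback $\textup{H}^3_b(\sigma)$.

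Next I would apply Lemma \ref{lem:pullback:cohomology} in degree $\bullet=3$ to conclude that
$$
\textup{H}^3_b(\sigma^f)=\textup{H}^3_b(\sigma) \ ,
$$
as maps $\textup{H}^3_{cb}(\psl(n,\bbC);\bbR) \to \textup{H}^3_b(\Gamma;\bbR)$. Evaluating both sides at the bounded Borel class $\beta_b(n)$ and then applying $\textup{J}^3$, $\textup{comp}^3_{N,\partial N}$, and the Kronecker pairing with $[N,\partial N]$ gives $\beta_n(\sigma^f)=\beta_n(\sigma)$, as desired.

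There is no genuine obstacle here: the entire content of the proposition has already been done inside Lemma \ref{lem:pullback:cohomology}, via the explicit chain homotopy $s^\bullet(\sigma,f)$ between $\textup{C}^\bullet_b(\sigma)$ and $\textup{C}^\bullet_b(\sigma^f)$. The only thing to be careful about in writing the proof is to make the logical dependence transparent, so that the reader sees that the remaining ingredients of $\beta_n$ are manifestly independent of the choice of representative of the cohomology class of $\sigma$, and therefore the pairing is well defined on classes.
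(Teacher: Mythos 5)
Your proposal is correct and follows exactly the paper's own argument: the paper likewise observes that the only $\sigma$-dependent ingredient in Definition \ref{def:borel:invariant} is the pullback $\textup{H}^3_b(\sigma)$ and then invokes Lemma \ref{lem:pullback:cohomology} to replace $\textup{H}^3_b(\sigma^f)$ by $\textup{H}^3_b(\sigma)$ inside the Kronecker pairing. Nothing is missing.
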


\begin{proof}
The statement is a direct consequence of Lemma \ref{lem:pullback:cohomology}. Indeed we have
\begin{align*}
\beta_n(\sigma^f)&=\langle \textup{comp}^3_{N, \partial N} \circ \textup{J}^3 \circ \textup{H}^3_b(\sigma^f)(\beta_b(n)),[N,\partial N] \rangle\\
&=\langle \textup{comp}^3_{N, \partial N} \circ \textup{J}^3 \circ \textup{H}^3_b(\sigma)(\beta_b(n)),[N,\partial N] \rangle=\beta(\sigma) \ .
\end{align*}
This concludes the proof. 
\end{proof}

Using jointly both Proposition \ref{prop:borel:representation} and Proposition \ref{prop:borel:cohomology}, we can immediately argue the following

\begin{cor}\label{cor:one:direction}
Let $\Gamma \leq \psl(2,\bbC)$ be a torsion-free non-uniform lattice. Then it holds
$$
|\beta_n(\rho)|={n+1 \choose 3}\vol(M) \ ,
$$
for every cocycle $\sigma:\Gamma \times X \rightarrow \psl(n,\bbC)$ which is cohomologous either to the restriction to $\Gamma$ of the irreducible representation $\pi_n$ or to its complex conjugated. 
\end{cor}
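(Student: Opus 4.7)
The plan is straightforward: chain together the two propositions just proved with the classical Bucher--Burger--Iozzi rigidity statement at the representation level. The claim is really a tautological consequence of the framework, and should be seen as recording one direction of the main Theorem \ref{teor:rigidity}.

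First, suppose $\sigma:\Gamma \times X \to \psl(n,\bbC)$ is cohomologous to $\sigma_{\pi_n|_\Gamma}$, the measurable cocycle associated to the restriction of the irreducible representation $\pi_n:\psl(2,\bbC) \to \psl(n,\bbC)$. By Proposition \ref{prop:borel:cohomology} the Borel invariant is constant along $\psl(n,\bbC)$-cohomology classes of cocycles, hence
$$
\beta_n(\sigma) \;=\; \beta_n(\sigma_{\pi_n|_\Gamma}).
$$

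Second, Proposition \ref{prop:borel:representation} identifies the cocycle-level invariant of an associated cocycle with the representation-level invariant, so
$$
\beta_n(\sigma_{\pi_n|_\Gamma}) \;=\; \beta_n(\pi_n|_\Gamma).
$$
At this point we are reduced to the classical situation. The Bucher--Burger--Iozzi rigidity theorem \cite{BBIborel} guarantees $|\beta_n(\pi_n|_\Gamma)| = \binom{n+1}{3}\vol(M)$, since the restriction of the irreducible representation realizes the maximum of the representation-level Borel invariant, and the conclusion follows.

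The case of the complex conjugate $\overline{\pi_n}$ is handled by exactly the same two-step reduction, observing that complex conjugation reverses the orientation of every regular ideal tetrahedron in $\bbH^3_{\bbR}$ and therefore only flips the sign of $B_n$, so it preserves $|\beta_n|$. There is no substantive obstacle in this corollary; the genuinely nontrivial direction, namely that maximality of $\beta_n(\sigma)$ forces $\sigma$ to be cohomologous to $\pi_n|_\Gamma$ or to $\overline{\pi_n}|_\Gamma$, is the content of Theorem \ref{teor:rigidity} and will require the integral formula of Proposition \ref{prop:formula} together with the classification of maximal measurable maps in Proposition \ref{prop:maximal:map}.
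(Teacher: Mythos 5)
Your proof is correct and follows exactly the paper's own argument: reduce to the associated cocycle $\sigma_{\pi_n|_\Gamma}$ via the cohomology-invariance of $\beta_n$ (Proposition \ref{prop:borel:cohomology}), identify it with the representation-level invariant via Proposition \ref{prop:borel:representation}, and invoke the Bucher--Burger--Iozzi maximality result, with the conjugate case handled by a sign flip. No differences worth noting.
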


\begin{proof}
Bucher, Burger and Iozzi \cite{BBIborel} proved that the restriction to $\Gamma$ of the irreducible representation has maximal Borel invariant. By Proposition \ref{prop:borel:representation} it follows
$$
\beta_n(\sigma_{\pi_n})={n+1 \choose 3}\vol(M) \ . 
$$
By changing suitably the sign, the same holds also for the restriction of the complex conjugated $\overline{\pi}_n$. Since by Proposition \ref{prop:borel:cohomology} the Borel invariant is constant along cohomology classes, the desired statement follows.  
\end{proof}

We now prove the natural bound on the absolute value on the Borel invariant. 

\begin{prop}\label{prop:borel:bound}
Let $\Gamma \leq \psl(2,\bbC)$ be a torsion-free non-uniform lattice and let $(X,\mu_X)$ be a standard Borel probability $\Gamma$-space. Given a measurable cocycle $\sigma:\Gamma \times X \rightarrow \psl(n,\bbC)$ it holds
$$
|\beta_n(\sigma)| \leq {n+1 \choose 3} \vol(M) \ .
$$
\end{prop}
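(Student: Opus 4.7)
The plan is to chase the definition of $\beta_n(\sigma)$ through the Kronecker pairing and bound each map in the composition by its operator seminorm, then apply Gromov--Thurston's proportionality principle to compute $\lVert [N,\partial N] \rVert_1$.

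First I would observe that the cochain map $\textup{C}^\bullet_b(\sigma)$ of Lemma \ref{lemma:pullback:cocycle} is norm non-increasing: since $\mu_X$ is a probability measure, for every $\psi \in \textup{C}^\bullet_{cb}(\psl(n,\bbC);\bbR)$ the triangle inequality yields
$$
|\textup{C}^\bullet_b(\sigma)(\psi)(\gamma_0,\ldots,\gamma_\bullet)| \leq \int_X \lVert \psi \rVert_\infty \, d\mu_X(x) = \lVert \psi \rVert_\infty \ ,
$$
so the induced map $\textup{H}^\bullet_b(\sigma)$ does not increase the canonical seminorm. Combining this with the bound $\lVert \beta_b(n) \rVert_\infty \leq {n+1 \choose 3}\nu_3$ from Proposition \ref{prop:borel:cocycle:BBI}, the class $\textup{H}^3_b(\sigma)(\beta_b(n)) \in \textup{H}^3_b(\Gamma;\bbR)$ has seminorm at most ${n+1 \choose 3}\nu_3$.

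Next I would analyse the map $\textup{J}^3$ of \eqref{eq:composition}. Gromov's Mapping Theorem yields an isometric identification $\textup{H}^\bullet_b(\Gamma;\bbR) \cong \textup{H}^\bullet_b(M;\bbR)$, the homotopy equivalence of pairs provides the isometric isomorphism $\textup{H}^\bullet_b(M,M\setminus N;\bbR) \cong \textup{H}^\bullet_b(N,\partial N;\bbR)$, and the inverse of $\textup{H}^\bullet_b(j)$ is isometric in degree $\geq 3$ by \cite{BBFIPP} thanks to the amenability of the fundamental groups of the cuspidal neighborhoods. Hence $\textup{J}^3$ preserves the canonical seminorm. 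The comparison map $\textup{comp}^3_{N,\partial N}$ is then norm non-increasing by construction, so the resulting class in $\textup{H}^3(N,\partial N;\bbR)$ has seminorm bounded by ${n+1 \choose 3}\nu_3$.

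To conclude, I would invoke Thurston's proportionality principle, which gives $\lVert [N,\partial N] \rVert_1 = \vol(M)/\nu_3$. The Kronecker pairing inequality then produces
$$
|\beta_n(\sigma)| \leq \lVert \textup{comp}^3_{N,\partial N} \circ \textup{J}^3 \circ \textup{H}^3_b(\sigma)(\beta_b(n)) \rVert_\infty \cdot \lVert [N,\partial N] \rVert_1 \leq {n+1 \choose 3}\nu_3 \cdot \frac{\vol(M)}{\nu_3} = {n+1 \choose 3}\vol(M) \ .
$$
The only delicate ingredient is the isometry of $\textup{H}^\bullet_b(j)^{-1}$ for the non-uniform case: this is precisely the content of \cite{BBFIPP} on bounded cohomology of pairs with amenable boundary components, and it is the reason the definition of $\beta_n(\sigma)$ is formulated via the relative fundamental class $[N,\partial N]$ rather than attempting to pair with a ``fundamental class'' on the open manifold $M$. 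Apart from this, the argument is a routine concatenation of norm-non-increasing properties and therefore presents no genuine obstacle.
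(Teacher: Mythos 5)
Your argument is correct, but it follows a genuinely different route from the paper. You bound the $\ell^\infty$-seminorm of $\textup{comp}^3_{N,\partial N}\circ\textup{J}^3\circ\textup{H}^3_b(\sigma)(\beta_b(n))$ step by step and then invoke the duality inequality together with the Gromov--Thurston proportionality principle for the relative class, $\lVert [N,\partial N]\rVert_1=\vol(M)/\nu_3$. The paper instead runs the class through the transfer maps: it uses the commutative diagram relating $\textup{trans}^3_\Gamma$ and $\tau^3_{\textup{dR}}$, the fact that $\textup{H}^3_{cb}(\psl(2,\bbC);\bbR)$ is one-dimensional and generated by $\beta_b(2)$, and the injectivity of $\tau^3_{\textup{dR}}$ to obtain the identity $\beta_n(\sigma)=\lambda\vol(M)$ with $\textup{comp}^3_{\psl(2,\bbC)}\circ\textup{trans}^3_\Gamma\circ\textup{H}^3_b(\sigma)(\beta_b(n))=\lambda\beta_b(2)$, and then bounds $|\lambda|\leq{n+1\choose 3}$ using that the transfer and the pullback are norm non-increasing together with $\lVert\beta_b(2)\rVert_\infty=\nu_3$. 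The two proofs are dual to one another: you use the proportionality principle in the form $\lVert [N,\partial N]\rVert_1\leq\vol(M)/\nu_3$ (the harder, smearing-construction direction, which for cusped manifolds is a nontrivial black box), while the paper uses the dual statement $\lVert\beta_b(2)\rVert_\infty=\nu_3$. What the paper's route buys, beyond the inequality, is the structural identity \eqref{eq:cochain:formula}, which is exactly the multiplicative formula of Proposition \ref{prop:formula} needed later for the rigidity argument; your route gives the bound more directly but produces no such identity. One further point to make explicit in your write-up: the duality inequality you use in the relative setting requires the compatibility of the seminorm on $\textup{H}^3_b(N,\partial N;\bbR)$ with the relative $\ell^1$-seminorm on $\textup{H}_3(N,\partial N;\bbR)$, which is standard but should be cited rather than treated as automatic.
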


\begin{proof}
Inspired by \cite{BBIborel} we have the following commutative diagram
$$
\xymatrix{
\textup{H}^3_{cb}(\psl(n,\bbC);\bbR) \ar[d]^{\textup{H}^3_b(\sigma)} \\
\textup{H}^3_b(\Gamma;\bbR) \ar[d]^{\textup{J}^3} \ar[rr]^{\textup{trans}^3_{\Gamma}} && \textup{H}^3_{cb}(\psl(2,\bbC);\bbR) \ar[dd]^{\textup{comp}^3_{\psl(2,\bbC)}}\\
\textup{H}^3_b(N,\partial N;\bbR) \ar[d]^{\textup{comp}^3_{N,\partial N}} &&  \\
\textup{H}^3(N,\partial N;\bbR) \ar[rr]^{\tau^3_{\textup{dR}}} && \textup{H}^3_c(\psl(2,\bbC);\bbR) \ .
}
$$

Recall that $\textup{H}^3_{cb}(\psl(2,\bbC);\bbR)$ is a one dimensional vector space generated by the bounded Borel class $\beta_b(2)$ (which coincides with the volume class). Thus there must exists a real number $\lambda \in \bbR$ such that 
$$
\textup{comp}^3_{\psl(2,\bbC)} \circ \textup{trans}^3_\Gamma \circ \textup{H}^3_b(\sigma)(\beta_b(n))=\lambda \beta(2) \ .
$$
If we consider now a differential form $\omega_{N,\partial N} \in \textup{H}^3(N, \partial N;\bbR)$ such that its evaluation with the fundamental class $[N,\partial N]$ gives us back the volume $\vol(M)$, it must hold $\tau^3_\textup{dR}(\omega_{N,\partial N})=\beta_b(2)$. As a consequence we get that
$$
\tau^3_{\textup{dR}} \circ \textup{comp}^3_{N,\partial N} \circ \textup{J}^3 \circ \textup {H}^3_b(\sigma)(\beta_b(n))=\lambda \tau^3_{\textup{dR}}(\omega_{N,\partial N}) \ .
$$
Since the transfer map $\tau^3_{\textup{dR}}$ is injective (see \cite{bucher2:articolo} for instance), we must have
$$
\textup{comp}^3_{N,\partial N} \circ \textup{J}^3 \circ \textup {H}^3_b(\sigma)(\beta_b(n))=\lambda \omega_{N,\partial N} \ .
$$
Evaluating both sides on the fundamental class $[N,\partial N]$ we get back
$$
\beta_n(\rho)=\langle \textup{comp}^3_{N,\partial N} \circ \textup{J}^3 \circ \textup {H}^3_b(\sigma)(\beta_b(n)),[N,\partial N] \rangle=\lambda \vol(M) \ .
$$
At the same time it holds
$$
| \lambda | \leq \frac{ \lVert \textup{trans}^3_\Gamma \circ \textup{H}^3_b(\sigma)(\beta_b(n)) \rVert_\infty}{\beta_b(2)}={n+1 \choose 3} \ ,
$$
since both $\textup{trans}^3_\Gamma$ and $\textup{H}^3_b(\sigma)$ are norm non-increasing. From the previous estimate it follows
$$
\left| \frac{\beta_n(\sigma)}{\vol(M)} \right| \leq {n+1 \choose 3} \ ,
$$
and this concludes the proof. 
\end{proof}

In virtue of the previous proposition we can give the following 

\begin{deft}\label{def:maximal:cocycle}
Let $\Gamma \leq \psl(2,\bbC)$ be a torsion-free non-uniform lattice. Let $(X,\mu_X)$ be a standard Borel probability $\Gamma$-space. A measurable cocycle $\sigma:\Gamma \times X \rightarrow \psl(n,\bbC)$ is \emph{maximal} if it holds $|\beta_n(\sigma)|={n+1 \choose 3}\vol(M)$. 
\end{deft}

We conclude the section by proving that parabolic cocycles cannot be maximal.

\begin{proof}[Proof of Proposition \ref{prop:parabolic:cocycle}]
Suppose that $\sigma:\Gamma \times X \rightarrow \psl(n,\bbC)$ is cohomologous to a cocycle $\sigma_0$ whose image is contained in a parabolic subgroup. Since the Borel invariant does not change along the $\psl(n,\bbC)$-cohomology class (Proposition \ref{prop:borel:cohomology}), we can directly consider the cocycle $\sigma_0$. By hypothesis there exists a proper parabolic subgroup $P_0 \leq \psl(n,\bbC)$ such that $\sigma_0:\Gamma \times X \rightarrow P_0$. As a consequence we have the following commutative diagram 
$$
\xymatrix{
\textup{H}^3_{cb}(\psl(n,\bbC);\bbR) \ar[rr]^{\textup{H}^3_b(\sigma_0)} \ar[d]^{\textup{H}^3_{cb}(i_0)} && \textup{H}^3_{b}(\Gamma;\bbR) \\
\textup{H}^3_{cb}(P_0;\bbR) \ar[urr] \ , \\ 
}
$$
where $\textup{H}^3_{cb}(i_0)$ is the restriction map induced by the inclusion $i_0: P_0 \rightarrow \psl(n,\bbC)$. If we denote by $\beta_{P_0}(n):=\textup{H}^3_{cb}(i_0)(\beta_b(n))$, the commutativity of the diagram above implies that 
$$
\textup{H}^3_b(\sigma_0)(\beta_b(n))=\textup{H}^3_b(\sigma_0)(\beta_{P_0}(n)) \ .
$$
Recall that $P_0$ is the stabilizer of some incomplete flag $0 \subset F_{i_1} \subset \ldots \subset F_{i_r}=\bbC^n$. We define $n_1=i_1, \ n_2=i_2 - i_1, \ \ldots , i_r=i_r-i_{r-1}$. Then $(n_1,\ldots,n_r)$ is the partition associated to $P_0$. It is easy to prove that we can write the class $\beta_{P_0}(n)$ as the following sum 
$$
\beta_{P_0}(n)=\sum_{i=1}^r \beta_b(n_i) \ ,
$$
where the sum on the right-hand side lies in the group $\textup{H}^3_{cb}(\prod_{i=1}^r \textup{GL}(n_i;\bbC);\bbR)$, which is isomorphic to $\textup{H}^3_{cb}(P_0;\bbR)$. As a consequence 
$$
|\beta_n(\sigma_0)| \leq \sum_{i=1}^r {n_i+1 \choose 3}\vol(M) \ ,
$$
and the statement follows. 
\end{proof}

\section{Rigidity of the Borel invariant}

In this section we are going to prove the main theorem of the paper. We are going to show that the Borel function is rigid, i.e. it attains its maximum value only on the cohomology class of the cocycle induced by the irreducible representation. The proof will rely crucially on the existence of a boundary map for maximal cocycles.

We start showing the integral formula stated in the introduction.

\begin{proof}[Proof of Proposition \ref{prop:formula}]
Following the proof of Proposition \ref{prop:borel:bound}, from the fact that $\textup{H}^3_{cb}(\psl(2,\bbC);\bbR)$ is one dimensional and generated by $\beta_b(2)$, we argue
\begin{equation}\label{eq:cochain:formula}
\textup{trans}^3_\Gamma \circ \textup{H}^3_b(\sigma)(\beta_b(n))=\frac{\beta_n(\sigma)}{\vol(M)} \beta_b(2) \ .
\end{equation}

Since we assumed the existence of a boundary map then we can substitue $\textup{H}^3_{b}(\sigma)(\beta_b(n))$ with the class $\textup{H}^3(\Phi^X)([B_n])$, in virtue of Lemma \ref{lem:pullback:cocycle:boundary}. 

Recall that $\Gamma$ acts doubly ergodically on the boundary $\bbP^1(\bbC)$. This implies that there are no alternating cochains in degree $2$, that means $\textup{L}^\infty_{\textup{alt}}(\bbP^1(\bbC)^2;\bbR)^\Gamma=0$. Thus we can rewrite Equation \eqref{eq:cochain:formula} in terms of cochains as follows
$$
\widehat{\textup{trans}}^3_\Gamma \circ \textup{C}^3(\Phi^X)(B_n)=\frac{\beta_n(\sigma)}{\vol(M)}B_2 \ .
$$
Since $B_2$ coincides with the hyperbolic volume function $\vol$, we get the desired formula. The fact that the formula holds for every $4$-tuple of points $(\xi_0,\ldots,\xi_3)$ is a consequence of \cite[Proposition 4.2]{bucher2:articolo}. 
\end{proof}

\begin{oss}
The previous formula shows for instance that the Borel invariant does not depend on the choice of the compact compact core $N$ used in its definition. 
\end{oss}

Before proving the main theorem of the section, we need to show that a maximal measurable cocycle admits a boundary map.

\begin{prop}\label{prop:existence:boundary}
Let $\Gamma \leq \psl(2,\bbC)$ be a torsion-free non-uniform lattice and let $(X,\mu_X)$ be a standard Borel probability $\Gamma$-space. A maximal measurable cocycle $\sigma:\Gamma \times X \rightarrow \psl(n,\bbC)$ admits a boundary map.
\end{prop}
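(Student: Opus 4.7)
The plan is to combine Zimmer's amenability machinery with the maximality hypothesis, in the spirit of Bader--Furman--Sauer.

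First I would recall that the stabilizer in $\psl(2,\bbC)$ of a point of $\bbP^1(\bbC)$ is a minimal parabolic subgroup, hence amenable; consequently the $\Gamma$-action on $\bbP^1(\bbC)$ is amenable in the sense of Zimmer, and so is the diagonal action on $\bbP^1(\bbC)\times X$. Applying Zimmer's theorem on amenable actions to the Polish $\psl(n,\bbC)$-space $\scrF(n,\bbC)$ yields a measurable $\sigma$-equivariant map $\Phi:\bbP^1(\bbC)\times X\to\calM^1(\scrF(n,\bbC))$, where $\calM^1$ denotes the space of probability measures endowed with the weak-$^\ast$ topology. This is the standard way of producing a measure-valued boundary map; promoting it to a genuine one is where the maximality assumption will enter.

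Next I would extend the Borel cocycle to probability measures by integration:
$$\widetilde B_n(\mu_0,\ldots,\mu_3):=\int_{\scrF(n,\bbC)^4} B_n(F_0,\ldots,F_3)\,d\mu_0(F_0)\cdots d\mu_3(F_3),$$
which still satisfies $|\widetilde B_n|\leq\binom{n+1}{3}\nu_3$ by Proposition~\ref{prop:borel:cocycle:BBI}. The pullback formalism of Section~\ref{sec:pullback:cocycle} goes through verbatim with $\phi$ replaced by $\Phi$ (one only needs a measurable $\sigma$-equivariant map into the weak-$^\ast$ dual of a Banach space, and probability measures sit inside $\textup{L}^\infty_{\textup{w}^\ast}$). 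Reproducing the derivation of Proposition~\ref{prop:formula} with $\widetilde B_n$ in place of $B_n$ gives, for every $4$-tuple $(\xi_0,\ldots,\xi_3)\in\bbP^1(\bbC)^4$,
$$\frac{\beta_n(\sigma)}{\vol(M)}\vol(\xi_0,\ldots,\xi_3)=\int_{\Gamma\backslash\psl(2,\bbC)}\int_X \widetilde B_n(\Phi(\overline g\xi_0,x),\ldots,\Phi(\overline g\xi_3,x))\,d\mu_X(x)\,d\mu(\overline g).$$

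Now I exploit maximality. Choosing $(\xi_0,\ldots,\xi_3)$ to be a positively oriented regular ideal tetrahedron, the left-hand side saturates the pointwise bound on $\widetilde B_n$. Consequently, for a.e.\ $(\overline g,x)$ the product measure $\Phi(\overline g\xi_0,x)\otimes\cdots\otimes\Phi(\overline g\xi_3,x)$ must be concentrated on maximal $4$-tuples of flags. A Fubini argument, combined with the transitivity of $\psl(2,\bbC)$ on positively oriented regular ideal tetrahedra of $\bbP^1(\bbC)$, transfers this to the statement that for a.e.\ $x\in X$ and a.e.\ regular ideal tetrahedron $(\xi_0,\ldots,\xi_3)$, the product $\Phi(\xi_0,x)\otimes\cdots\otimes\Phi(\xi_3,x)$ is supported on maximal configurations. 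Invoking the rigidity statement \cite[Corollary 20]{BBIborel}, which says that three flags of a maximal tuple determine the fourth, together with a standard disintegration argument, it follows that $\Phi(\xi,x)$ is a Dirac mass for a.e.\ $(\xi,x)$. Setting $\phi(\xi,x)$ equal to its support point yields the desired $\sigma$-equivariant boundary map $\phi:\bbP^1(\bbC)\times X\to\scrF(n,\bbC)$.

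The main obstacle I anticipate is the promotion step: rigorously checking that identity \eqref{eq:cochain:formula} in the proof of Proposition~\ref{prop:formula} remains valid with $\Phi$ replacing $\phi$ and $\widetilde B_n$ replacing $B_n$. This requires revisiting the pullback construction of Section~\ref{sec:pullback:cocycle} with coefficients in $\calM^1(\scrF(n,\bbC))$ viewed weak-$^\ast$-measurably, and verifying that Lemma~\ref{lem:pullback:cocycle:boundary} still applies. Once this is established, the Dirac-concentration argument is essentially forced by the rigidity of maximal configurations, and the rest is a routine disintegration.
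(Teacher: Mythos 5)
Your proposal follows essentially the same route as the paper's proof: amenability of the $\Gamma$-action on $\bbP^1(\bbC)\times X$ produces a $\sigma$-equivariant map into $\calM^1(\scrF(n,\bbC))$, the multiplicative formula is extended to the measure-valued setting by integrating $B_n$ against product measures, maximality forces the pairing to saturate the bound almost everywhere, and \cite[Corollary 20]{BBIborel} forces the measures to be Dirac. The paper handles the ``promotion step'' you flag by simply invoking the strategy of \cite[Section 8.2]{BBIborel}, so your explicit attention to it is if anything slightly more careful, but the argument is the same.
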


\begin{proof}
We denote by $\mathcal{M}^1(\scrF(n,\bbC))$ the space of probability measures on $\scrF(n,\bbC)$. Since the space $\bbP^1(\bbC) \times X$ is $\Gamma$-amenable by \cite[Proposition 4.3.4]{zimmer:libro}, there exists a measurable $\sigma$-equivariant map 
$$
\phi:\bbP^1(\bbC) \times X \rightarrow \mathcal{M}^1(\scrF(n,\bbC)) \ ,
$$
by \cite[Proposition 4.3.9]{zimmer:libro}. Under the maximality assumption we are going to show that the essential image of $\phi$ lies in the set of Dirac masses. 
Given any $\xi_0,\ldots,\xi_3 \in \bbP^1(\bbC)$ and $x \in X$, we are going to denote by $\phi(\xi_0,x) \otimes \ldots \otimes  \phi(\xi_3,x)$ the product measure on $\scrF(n,\bbC)$. Similarly we are going to write
$$
\langle \phi(\xi_0,x) \otimes \ldots \otimes \phi(\xi_3,x) , B_n \rangle:=\int_{\scrF(n,\bbC)^4} B_n(F_0,\ldots,F_3) d\phi(\xi_0,x)(F_0) \ldots d\phi(\xi_3,x)(F_3) \ .
$$
Following the same strategy of \cite[Section 8.2]{BBIborel} we get that 
\begin{small}
\begin{equation}\label{eq:boundary:measure}
\frac{\beta_n(\sigma)}{\vol(M)}\vol(\xi_0,\ldots,\xi_3)=\int_{\Gamma \backslash \psl(2,\bbC)}\int_X \langle \phi(\overline{g}\xi_0,x) \otimes \ldots \otimes \phi(\overline{g}\xi_3,x) , B_n \rangle d\mu_X(x)d\mu(\overline{g}) \ .
\end{equation}
\end{small}
for every $\xi_0,\ldots,\xi_3 \in \bbP^1(\bbC)$. Without loss of generality we can assume that $\sigma$ is positively maximal, that is $\beta_n(\sigma)={n+1 \choose 3}\vol(M)$. The case when $\sigma$ is negatively maximal is similar and we omit it. Since Equation \eqref{eq:boundary:measure} holds for every $4$-tuple $(\xi_0,\ldots,\xi_3)$, we can consider the vertices of a regular ideal tetrahedron which is positively oriented. By the maximality assumption we get that
$$
{n+1 \choose 3}\nu_3=\int_{\Gamma \backslash \psl(2,\bbC)}\int_X \langle \phi(\overline{g}\xi_0,x) \otimes \ldots \otimes \phi(\overline{g}\xi_3,x) , B_n \rangle d\mu_X(x)d\mu(\overline{g}) \
$$
The bound on the Borel function $B_n$ implies that
$$
\langle \phi(\overline{g}\xi_0,x) \otimes \ldots \otimes \phi(\overline{g}\xi_3,x) , B_n \rangle={ n+1 \choose 3} \nu_3 \ ,
$$
for almost every $\overline{g} \in \Gamma \backslash \psl(2,\bbC), x \in X$. Thanks to the equivariance of $\phi$ we get that 
$$
\langle \phi(g\xi_0,x) \otimes \ldots \otimes \phi(g\xi_3,x) , B_n \rangle={ n+1 \choose 3} \nu_3 \ ,
$$
for almost every $g \in \psl(2,\bbC), x \in X$. The previous equation can be rewritten as 
$$
\int_{\scrF(n,\bbC)^4} B_n(F_0,\ldots,F_3) d\phi(g\xi_0,x)(F_0) \ldots d\phi(g\xi_3,x)(F_3)={n+1 \choose 3}\nu_3 \ ,
$$
for almost every $g \in \psl(2,\bbC), x \in X$. Fix now a triple of flags $F_0,F_1,F_2 \in \scrF(n,\bbC)$ such that 
$$
B(F_0,F_1,F_2,F_3)={n+1 \choose 3}\nu_3 \ ,
$$
holds for $\phi(g\xi_3,x)$-almost every $F_3 \in \scrF(n,\bbC)$. By \cite[Corollary 20]{BBIborel} the flag $F_3$ is unique and $\phi(g\xi_3,x)$ must be a Dirac mass. Since the same reasoning applies for almost every $g \in \psl(2,\bbC)$ and almost every $x \in X$, the statement follows. 
\end{proof}

We are finally ready to show the main theorem 

\begin{proof}[Proof of Theorem \ref{teor:rigidity}]
One direction is proved by Corollary \ref{cor:one:direction}. We are going to prove the other implication. Up to composing the cocycle with the complex conjugation, we are going to suppose that $\sigma$ is positively maximal. By Proposition \ref{prop:existence:boundary} $\sigma$ admits a boundary map $\phi:\bbP^1(\bbC) \times X \rightarrow \scrF(n,\bbC)$. 

Consider $\xi_0,\ldots,\xi_3 \in \bbP^1(\bbC)$ vertices of a regular ideal tetrahedron which is positively oriented. For almost every $x \in X$ define the slice
$$
\phi_x: \bbP^1(\bbC) \rightarrow \scrF(n,\bbC) \ , \ \ \ \phi_x(\xi):=\phi(\xi,x).
$$
Notice that these maps are all measurable by \cite[Lemma 2.6]{fisher:morris:whyte}. 

As a consequence of Proposition~\ref{prop:formula} and of the maximality of $\beta_n(\sigma)$ we have that
$$
\int_{\Gamma \backslash \psl(2,\bbC)} \int_X B_n(\phi_x(\overline{g} \xi_0),\ldots,\phi_x(\overline{g} \xi_3))d\mu_X(x)d\mu (\overline{g})={n+1 \choose 3}\nu_3 \ .
$$
The equation above implies that for almost every $x \in X$ and almost every $\overline{g} \in \Gamma \backslash \psl(2,\bbC)$ it holds
$$
B_n(\phi_x(\bar g \xi_0),\ldots,\phi_x(\bar g \xi_3))={n+1 \choose 3}\nu_3
$$
and by the $\sigma$-equivariance of $\phi$ the equality can be extended to almost every $x \in X$ and almost every $g \in \psl(2,\bbC)$. The same equality will hold also if $(\xi_0,\ldots,\xi_3)$ are vertices of a regular ideal tetrahedron which is negatively oriented. Hence for almost every $x \in X$ the measurable map $\phi_x$ satisfies the hypothesis of Proposition \ref{prop:maximal:map} and hence there must exists $f(x) \in \psl(n,\bbC)$ such that
\[
\phi_x(\xi)=f(x)\mathcal{V}_n(\xi),
\]
where $\mathcal{V}_n:\bbP^1(\bbC) \rightarrow \scrF(n,\bbC)$ is the Veronese embedding. In this way we get a map $f:X \rightarrow \psl(n,\bbC)$ which is measurable by the measurability of $\phi$. 

By the equivariance of the Verenose embedding with respect to the irreducible representation $\pi_n$, for every $\gamma \in \Gamma$ we have that 
\[
\phi_{\gamma x}(\gamma \xi)=f(\gamma x)\mathcal{V}_n(\gamma \xi)=f(\gamma x)\pi_n(\gamma) \mathcal{V}_n(\xi)
\]
and by the equivariance of $\sigma$ it holds at the same time 
\[
\phi_{\gamma x}(\gamma \xi)=\sigma(\gamma,x)\phi_x(\xi)=\sigma(\gamma,x)f(x)\mathcal{V}_n(\xi).
\]
The equations above imply
\[
\pi_n(\gamma)=f^{-1}(\gamma x)\sigma(\gamma,x)f(x),
\]
and the theorem is proved.
\end{proof}

We want to conclude the paper with some remark about the machinery developed so far. It should be clear to the reader that all the techniques studied in Section \ref{sec:pullback:cocycle} have a wider range of application. Indeed, as already written in the introduction, this paper has been a source of inspiration to the author who has applied this machinery to the study of measurable cocycles associated to lattices in semisimple Lie groups obtaining several rigidity results similar to Theorem \ref{teor:rigidity} (see \cite{moraschini:savini,moraschini:savini:2,savini:surface,sarti:savini,sarti:savini:2}). The crucial aspect of this paper is that the Borel invariant is the first example of \emph{multiplicative constant} (see \cite[Definition 3.20]{moraschini:savini}) studied by the author. Indeed it satisfies the formula of Proposition \ref{prop:formula}, that the author has later named \emph{multiplicative formula}. 

\bibliographystyle{amsalpha}

\bibliography{biblionote}

\end{document}